\documentclass[12pt]{article}%
\usepackage{amsmath, amsthm}
\usepackage{amscd}
\usepackage{graphicx}
\usepackage{amssymb}
\usepackage{amsmath}
\usepackage{amsfonts}%
\setcounter{MaxMatrixCols}{30}
\providecommand{\U}[1]{\protect\rule{.1in}{.1in}}
\providecommand{\U}[1]{\protect\rule{.1in}{.1in}}
\textwidth 12cm     
\textheight 20cm
\newtheoremstyle{theorem}
{10pt}
{10pt}
{\sl}
{\parindent}
{\bf}
{. }
{ }
{}
\providecommand{\U}[1]{\protect\rule{.1in}{.1in}}
\textwidth 12cm
\textheight 20cm
\providecommand{\U}[1]{\protect\rule{.1in}{.1in}}
\newtheorem{theorem}{Theorem}

\newtheorem{corollary}[theorem]{Corollary}

\newtheorem{definition}[theorem]{Definition}
\newtheorem{example}[theorem]{Example}

\newtheorem{lemma}[theorem]{Lemma}

\newtheorem{proposition}[theorem]{Proposition}
\newtheorem{remark}[theorem]{Remark}

\begin{document}

\title{Shannon-Like Parseval Frame Wavelets on Some Two Step Nilpotent Lie Groups }
\author{Vignon Oussa\\Dept.\ of Mathematics \\Bridgewater State University\\Bridgewater, MA 02325 U.S.A.\\vignon.oussa@bridgew.edu}
\date{}
\maketitle

\begin{abstract}
We construct Shannon-like Parseval frame wavelets on a class of non
commutative two-step nilpotent Lie groups. Our work was inspired by a
construction given by Azita Mayeli on the Heisenberg group. The tools used
here are representation theoretic. However, a great deal of Gabor theory is
used for the construction of the wavelets. The construction obtained here is
very explicit, and we are even able to compute an upper bound for the $L^{2}$
norm for these Parseval frame wavelets.

\end{abstract}

\textbf{AMS Subject Classification:} 22E25

\textbf{Key Words:} Nilpotent, Lie groups, wavelets

\section{Introduction}

A wavelet on $\mathbb{R}^{n}$ is a function generating an orthonormal basis by
integral shifts and dilations in $L^{2}\left(  \mathbb{R}^{n}\right)  $. At
this point, a great deal is already known about wavelets on $\mathbb{R}^{n}.$
See \cite{Guido} for example. However, the theory of wavelets on non
commutative domains is not as mature. In fact, it is significantly more
difficult to construct wavelets over non commutative groups. Several
mathematicians have made significant contributions to the field. For example,
when it comes to continuous wavelets on locally compact groups, the monograph
by Hartmut F\"{u}hr \cite{Fuhr} is a great source of reference. In
\cite{Currey}, Currey studies continuous wavelets on nilpotent Lie groups, and
Mayeli and Currey developed concepts of wavelet sets on the Heisenberg group
in \cite{CurreyAzita}. Also, in \cite{Lemarie}, wavelets on stratified Lie
groups are studied by Lemari\'{e}. Other imporant contributions can be found
in \cite{Liu}, \cite{Lemarie1}, and \cite{Azita}.

Since the closest objects to $\mathbb{R}^{n}$ are simply connected, connected
non commutative nilpotent Lie groups, it is natural to extend the classical
results of wavelet theory to this class of groups. Even though topologically,
commutative Lie groups, and non commutative simply connected, connected
nilpotent Lie groups are identical, their group structures are quite
different. Let $N$ be a simply connected, and connected nilpotent Lie group.
In order to generate a wavelet system in $L^{2}\left(  N\right)  $, just like
it is done in the classical case, we hope to be able to use a set of
translations and dilations operators acting on either a single function or a
countable set of functions. Naturally, the translation actions should come
from the restriction of the left regular representation of the group to some
discrete set (or lattice) $\Gamma\subset N$. The dilation actions should be
implemented from a discrete subgroup of the outer-automorphism group of $N$.
We would like to remark that the existence of a lattice subgroup in $N$ is in
fact equivalent to the existence of a rational structure on its Lie algebra
\cite{Corwin}.

In this paper, we prove the existence and we give an explicit construction of
some Shannon-like Parseval frame wavelets on a class of step-two nilpotent Lie
groups which we describe as follows. Let $N$ be an $n$-dimensional non
commutative, simply connected, connected, nilpotent Lie group with rational
structure. We assume that the Lie algebra has a fixed Jordan
H\"{o}lder-basis,
\[
J=\left\{  B_{1},\cdots,B_{n}\right\}  =\left\{
\begin{array}
[c]{c}%
Z_{1},Z_{2},\cdots,Z_{n-2d},X_{1},X_{2},\cdots,X_{d},\\
Y_{1},Y_{2},\cdots,Y_{d}%
\end{array}
\right\}  .
\]
The Lie algebra of $N$ has the following properties: $\mathfrak{n=z}\left(
\mathfrak{n}\right)  \oplus\mathfrak{a}\oplus\mathfrak{b,}$ where
$\mathfrak{z}\left(  \mathfrak{n}\right)  =\mathfrak{\mathbb{R}}$-span
$\left\{  Z_{1},Z_{2},\cdots,Z_{n-2d}\right\}  $ is the center of
$\mathfrak{n,}$
\[
\mathfrak{b=\mathbb{R}}-\mathrm{span}\left\{  X_{1},X_{2},\cdots
,X_{d}\right\}  ,
\]
$\mathfrak{a}$ $\mathfrak{=}$ $\mathfrak{\mathbb{R}}$-span $\left\{
Y_{1},Y_{2},\cdots,Y_{d}\right\}  ,$ and $\left[  \mathfrak{a,b}\right]
\subseteq\mathfrak{z}\left(  \mathfrak{n}\right)  .$ Thus, $N$ is isomorphic
to a semi-direct product group of the type $\left(  \mathbb{R}^{n-2d}%
\times\mathbb{R}^{d}\right)  \rtimes\mathbb{R}^{d}.$ Moreover $\exp\left(
\mathfrak{z}\left(  \mathfrak{n}\right)  \oplus\mathfrak{a}\right)  $ is a
maximal commutative normal subgroup of $N,$ and of course, $\mathfrak{z}%
\left(  \mathfrak{n}\right)  \oplus\mathfrak{a}$ is a maximal commutative
ideal of $\mathfrak{n.}$ We also assume that the generic rank of the matrix
\[
\left(
\begin{array}
[c]{ccc}%
\left[  B_{1},B_{1}\right]  & \cdots & \left[  B_{1},B_{n}\right] \\
\vdots & \ddots & \vdots\\
\left[  B_{n},B_{1}\right]  & \cdots & \left[  B_{n},B_{n}\right]
\end{array}
\right)
\]
is equal to $2d$ on $\left[  \mathfrak{n},\mathfrak{n}\right]  .$This class of
groups which contains the Heisenberg groups, direct products of Heisenberg
groups with $\mathbb{R}^{m}$, and various generalizations of the Heisenberg
groups, has also been studied in \cite{Vignon}.

Let $L$ be the left regular representation\textbf{\ }of $N$ acting in
$L^{2}\left(  N\right)  $ endowed with its canonical left Haar measure. We
show that there exists a group
\[
H=\left\{  A^{j}:A=\exp\left(  U\right)  ,j\in\mathbb{Z}\right\}
<\mathrm{Aut}\left(  N\right)  ,
\]
such that $\left[  U,Z_{i}\right]  =\ln\left(  2\right)  Z_{i},$ for $1\leq
i\leq n-2d,$ $\left[  U,Y_{k}\right]  =\ln\left(  2\right)  Y_{k},$ for $1\leq
k\leq d,$ and $\left[  U,X_{k}\right]  =0,$ for $1\leq k\leq d.$ In other
words, $H$ is isomorphic to a discrete subgroup of the automorphism group of
$N.$ Defining a unitary representation of $H$ acting by non-expansive
dilations on $L^{2}\left(  N\right)  ,$ such that $D_{A}:L^{2}\left(
N\right)  \rightarrow L^{2}\left(  N\right)  ,$
\[
f\left(  \cdot\right)  \mapsto\det\left(  Ad_{A}\right)  ^{-1/2}f\left(
A^{-1}\cdot\right)  ,
\]
our main results are summarized in the following terms. We show the existence
of a lattice subgroup which is generated by the discrete set
\[
\Gamma={\prod\limits_{k=1}^{n-2d}}\exp\left(  \mathbb{Z}Z_{k}\right)
\prod\limits_{k=1}^{d}\exp\left(  \mathbb{Z}Y_{k}\right)  \prod\limits_{k=1}%
^{d}\exp\left(  \mathbb{Z}X_{k}\right)  \subset N,
\]
and an infinite countable family of functions $\left\{  f_{k}:k\in
\mathbb{I}\right\}  \subset L^{2}\left(  N\right)  $ such that the system
\[
\left\{  D_{A^{j}}L\left(  \gamma\right)  f_{k}:\gamma\in\Gamma,j\in
\mathbb{Z},k\in\mathbb{I}\right\}
\]
is a Parseval frame in $L^{2}\left(  N\right)  $. For a class of
finite-multiplicity subspaces of $L^{2}\left(  N\right)  $ there exists a
finite number of functions
\[
\left\{  f_{k}:k\in\mathbb{A}\right\}  \subset L^{2}\left(  N\right)
\]
such that the system
\[
\left\{  D_{A^{j}}L\left(  \gamma\right)  f_{k}:\gamma\in\Gamma,j\in
\mathbb{Z},k\in\mathbb{A}\right\}
\]
forms a Parseval frame in $L^{2}\left(  N\right)  $. In particular for
multiplicity-free subspaces, the set $\mathbb{A}$ is a singleton, and in all
cases, $\left\Vert f_{k}\right\Vert _{L^{2}\left(  N\right)  }\leq
2^{\frac{2d-n}{2}}.$

We organize the paper as follows. In the second and third section, we review
some important notions of Gabor theory, and analysis on nilpotent Lie groups.
The main result is proved in the fourth section, and a construction of
Parseval frame wavelets is also given in the same section.

\section{Preliminaries}

A \textbf{lattice} $\Lambda$ in $\mathbb{R}^{2d}$ is a discrete subgroup of
the additive group $\mathbb{R}^{2d}$ that is $\Lambda=M\mathbb{Z}^{2d}$ with
$M$ being a non singular matrix. A \textbf{separable lattice} has the form
$\Lambda=A\mathbb{Z}^{d}\times B\mathbb{Z}^{d}.$ The \textbf{volume} of a
lattice $\Lambda=M\mathbb{Z}^{d}$ equals the Lebesgue measure of
$\mathbb{R}^{d}/\Lambda$ that is $vol\left(  \mathbb{R}^{d}/\Lambda\right)
=\left\vert \det M\right\vert $ and the \textbf{density} of $\Lambda$ is
$d\left(  \Lambda\right)  =\left(  vol\left(  \Lambda\right)  \right)  ^{-1}.$
Let $A\mathbb{Z}^{d}\times B\mathbb{Z}^{d}$ be a separable lattice in
$\mathbb{R}^{2d},$ and $g\in L^{2}\left(  \mathbb{R}^{d}\right)  .$ A
\textbf{Gabor system} $\mathcal{G}\left(  g,A\mathbb{Z}^{d}\times
B\mathbb{Z}^{d}\right)  $ is a sequence of functions defined as
\[
\left\{  e^{2\pi i\left\langle t,y\right\rangle }g\left(  t-x\right)  :y\in
B\mathbb{Z}^{d},\text{ }x\in A\mathbb{Z}^{d}\right\}  .
\]
For example, $\mathcal{G}\left(  \chi_{\left[  0,1\right)  },\mathbb{Z}%
^{d}\times\mathbb{Z}^{d}\right)  $ is a Gabor system, and an orthonormal basis
in $L^{2}\left(  \mathbb{R}^{d}\right)  .$ A sequence $\left\{  f_{n}%
:n\in\mathbb{Z}\right\}  $ of elements in a Hilbert space $H$ is called a
\textbf{frame} if there are constant $A,B>0$ such that
\[
A\left\Vert f\right\Vert ^{2}\leq{\displaystyle\sum\limits_{n\in\mathbb{Z}}%
}\left\vert \left\langle f,f_{n}\right\rangle \right\vert ^{2}\leq B\left\Vert
f\right\Vert ^{2}\text{ for all }f\in H.
\]
The numbers $A,B$ in the definition of a frame are called \textbf{lower} and
\textbf{upper bounds} respectively. A frame is a \textbf{tight frame} if $A=B
$ and a \textbf{normalized tight frame} or \textbf{Parseval frame} if $A=B=1.
$

The following results are well-known in Gabor theory, and can be found in
\cite{Han}.

\begin{proposition}
\label{d}Let $\Lambda$ be a full rank lattice in $\mathbb{R}^{2d}$ with
$d\left(  \Lambda\right)  \geq1$. The following are equivalent

\begin{enumerate}
\item There exists $g\in L^{2}\left(  \mathbb{R}^{d}\right)  $ such that
$\mathcal{G}\left(  g,\Lambda\right)  $ is complete in $L^{2}\left(
\mathbb{R}^{d}\right)  .$

\item There exists $g\in L^{2}\left(  \mathbb{R}^{d}\right)  $ such that
$\mathcal{G}\left(  g,\Lambda\right)  $ is a frame in $L^{2}\left(
\mathbb{R}^{d}\right)  .$

\item There exists $g\in L^{2}\left(  \mathbb{R}^{d}\right)  $ such that
$\mathcal{G}\left(  g,\Lambda\right)  $ is a Parseval frame in $L^{2}\left(
\mathbb{R}^{d}\right)  .$
\end{enumerate}
\end{proposition}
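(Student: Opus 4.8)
The plan is to dispose of the easy implications first and then concentrate all the work on producing a single Parseval frame. The implication $(3)\Rightarrow(2)$ is immediate, since a Parseval frame is a frame with $A=B=1$; and $(2)\Rightarrow(1)$ follows because the lower frame bound $A\|f\|^{2}\le\sum_{\lambda}|\langle f,\pi(\lambda)g\rangle|^{2}$ forces $f=0$ whenever $f$ is orthogonal to the whole system, so a frame is complete. Hence only $(1)\Rightarrow(3)$ carries content. Because the construction I outline uses nothing about $(1)$ beyond the standing hypothesis $d(\Lambda)\ge1$, it is enough to exhibit one $g\in L^{2}(\mathbb{R}^{d})$ for which $\mathcal{G}(g,\Lambda)$ is a Parseval frame; all three assertions then hold together.

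First I would reduce the problem from Parseval frames to arbitrary frames. Write $\pi(x,y)h(t)=e^{2\pi i\langle t,y\rangle}h(t-x)$, so that $\mathcal{G}(g,\Lambda)=\{\pi(\lambda)g:\lambda\in\Lambda\}$, and recall that the time-frequency shifts obey $\pi(\mu)^{*}\pi(\lambda)=c(\mu,\lambda)\,\pi(\lambda-\mu)$ with $|c(\mu,\lambda)|=1$. If $\mathcal{G}(g,\Lambda)$ is a frame with frame operator $S=\sum_{\lambda\in\Lambda}\langle\,\cdot\,,\pi(\lambda)g\rangle\,\pi(\lambda)g$, then reindexing the defining sum over the group $\Lambda$ shows that the cocycle phases cancel and $S$ commutes with every $\pi(\mu)$, $\mu\in\Lambda$. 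Therefore $S^{-1/2}$ commutes with the shifts as well, and the canonical Parseval frame $\{S^{-1/2}\pi(\mu)g\}$ coincides with $\{\pi(\mu)S^{-1/2}g\}=\mathcal{G}(S^{-1/2}g,\Lambda)$, again a Gabor system. Thus the existence of any Gabor frame over $\Lambda$ already yields a Gabor Parseval frame over $\Lambda$, and I am left only to produce a frame.

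To produce a frame under $d(\Lambda)\ge1$ I would begin at critical density. The Preliminaries already record that $\mathcal{G}(\chi_{[0,1)^{d}},\mathbb{Z}^{2d})$ is an orthonormal basis, hence a Parseval frame, so $(3)$ holds for $\Lambda=\mathbb{Z}^{2d}$. I would transport this model by a metaplectic operator: a symplectic matrix $T$ gives a unitary $U_{T}$ on $L^{2}(\mathbb{R}^{d})$ with $U_{T}\pi(\lambda)U_{T}^{*}=\pi(T\lambda)$ up to a unimodular phase, so $\mathcal{G}(g,\Lambda)$ is a frame precisely when $\mathcal{G}(U_{T}g,T\Lambda)$ is. When $d(\Lambda)>1$ I would pass to a volume-$1$ sublattice $\Lambda_{0}\subseteq\Lambda$ and oversample: splitting $\Lambda$ into $[\Lambda:\Lambda_{0}]$ cosets of $\Lambda_{0}$ writes $\mathcal{G}(g,\Lambda)$ as a union of that many unitary copies of the Parseval frame $\mathcal{G}(g,\Lambda_{0})$, which is therefore a tight frame of bound $[\Lambda:\Lambda_{0}]$, renormalized to a Parseval frame by a scalar.

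The hard part is the critical-density existence for a general lattice. For $d=1$ the symplectic group is all of $SL(2,\mathbb{R})$ and every unimodular lattice is symplectically equivalent to $\mathbb{Z}^{2}$, so the metaplectic transfer finishes the argument; but for $d\ge2$ the symplectic orbit of $\mathbb{Z}^{2d}$ is a proper subvariety of the space of unimodular lattices, and an arbitrary volume-$1$ lattice cannot be brought to the integer lattice this way. A second difficulty is that when $1/\mathrm{vol}(\Lambda)$ is not an integer there is no exact volume-$1$ sublattice, so the naive oversampling bookkeeping fails. Both points are exactly where one must invoke the deeper Gabor-analytic machinery, namely Zak-transform fiberization and the lattice-tiling results of \cite{Han}, which guarantee a frame for every $\Lambda$ with $d(\Lambda)\ge1$; combined with the frame-operator reduction above, this closes the cycle.
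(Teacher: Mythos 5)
Your proposal is correct, and it is actually more detailed than what the paper provides: the paper gives no proof of Proposition \ref{d} at all, stating only that the result is well-known in Gabor theory and citing \cite{Han}. Your easy implications $(3)\Rightarrow(2)\Rightarrow(1)$ are right, and so is your canonical-frame reduction: the frame operator $S$ of a Gabor frame commutes with the time-frequency shifts $\pi(\mu)$, $\mu\in\Lambda$, because the cocycle phases cancel, hence $S^{-1/2}$ commutes as well and $\mathcal{G}(S^{-1/2}g,\Lambda)$ is again a Gabor system. Your structural analysis of the constructive routes is also accurate: metaplectic transfer settles $d=1$ since every unimodular lattice in $\mathbb{R}^{2}$ has a basis matrix in $SL(2,\mathbb{R})=Sp(1,\mathbb{R})$ (after adjusting by an element of $GL(2,\mathbb{Z})$ of determinant $-1$ if needed), coset oversampling handles integer index, and both devices genuinely fail in general, as your dimension count (symplectic orbit of dimension $2d^{2}+d$ inside the $(4d^{2}-1)$-dimensional space of unimodular lattices) and your observation about non-integer $1/\mathrm{vol}(\Lambda)$ show. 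The remaining core, namely existence of a window for an arbitrary full-rank lattice of density at least one, is precisely the main theorem of Han and Wang, so your final appeal to \cite{Han} rests on exactly the same source as the paper itself. Two small remarks: \cite{Han} produces a \emph{tight} frame directly, with window essentially a characteristic function coming from their lattice-tiling construction, so your $S^{-1/2}$ step, while correct, is dispensable there, and the operative technique is lattice tiling rather than Zak-transform fiberization; and your strategy in fact proves the stronger statement that all three conditions hold whenever $d(\Lambda)\geq 1$, which trivializes the stated equivalence. That is legitimate, and it is the real content behind the citation, since the density theorem rules out even completeness when $d(\Lambda)<1$.
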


\begin{proposition}
\label{norm} Let $\mathcal{G}\left(  g,A\mathbb{Z}^{d}\times B\mathbb{Z}
^{d}\right)  $ be a Gabor system. If
\[
\mathcal{G}\left(  g,A\mathbb{Z} ^{d}\times B\mathbb{Z}^{d}\right)
\]
is a Parseval frame in $L^{2}\left(  \mathbb{R} ^{d}\right)  $ then
$\left\Vert g\right\Vert ^{2}=\left\vert \det A\det B\right\vert .$
\end{proposition}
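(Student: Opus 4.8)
The plan is to convert the Parseval frame identity into a pointwise ``conservation of energy'' statement for the $A\mathbb{Z}^{d}$-periodization of $\abs{g}^{2}$, and then to recover $\left\Vert g\right\Vert ^{2}$ by integrating over a single period. Writing $g_{x,y}(t)=e^{2\pi i\langle t,y\rangle}g(t-x)$, the frame hypothesis reads $\sum_{x\in A\mathbb{Z}^{d}}\sum_{y\in B\mathbb{Z}^{d}}\abs{\langle f,g_{x,y}\rangle}^{2}=\left\Vert f\right\Vert ^{2}$ for every $f\in L^{2}(\mathbb{R}^{d})$. First I would freeze $x$ and carry out the summation over $y\in B\mathbb{Z}^{d}$. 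The key observation is that $\langle f,g_{x,y}\rangle=\int_{\mathbb{R}^{d}}f(t)\overline{g(t-x)}\,e^{-2\pi i\langle t,y\rangle}\,dt$ is, for $y\in B\mathbb{Z}^{d}$, a Fourier coefficient of the function $t\mapsto f(t)\overline{g(t-x)}$. The characters $e^{2\pi i\langle t,y\rangle}$ with $y\in B\mathbb{Z}^{d}$ are periodic with respect to the dual lattice $B^{-T}\mathbb{Z}^{d}$, whose fundamental domain has volume $\abs{\det B}^{-1}$, and after normalization they form an orthonormal basis of $L^{2}$ on such a domain.

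The technical device I would use to avoid the cross terms produced by periodizing $f(t)\overline{g(t-x)}$ is to test only against functions $f$ whose support is small enough that its $B^{-T}\mathbb{Z}^{d}$-translates are pairwise disjoint. For such $f$, Parseval's identity for Fourier series collapses to $\sum_{y\in B\mathbb{Z}^{d}}\abs{\langle f,g_{x,y}\rangle}^{2}=\abs{\det B}^{-1}\int_{\mathbb{R}^{d}}\abs{f(t)}^{2}\abs{g(t-x)}^{2}\,dt$. Summing over $x\in A\mathbb{Z}^{d}$ and interchanging sum and integral (legitimate by Tonelli, since all terms are nonnegative) gives
\[
\left\Vert f\right\Vert ^{2}=\frac{1}{\abs{\det B}}\int_{\mathbb{R}^{d}}\abs{f(t)}^{2}\Big(\sum_{x\in A\mathbb{Z}^{d}}\abs{g(t-x)}^{2}\Big)\,dt .
\]
Since this holds for all $f$ supported in arbitrarily small sets that together cover $\mathbb{R}^{d}$, I would conclude that $\sum_{x\in A\mathbb{Z}^{d}}\abs{g(t-x)}^{2}=\abs{\det B}$ for almost every $t$.

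Finally I would integrate this last identity over a fundamental domain $Q$ of the time lattice $A\mathbb{Z}^{d}$, which has volume $\abs{\det A}$. On the right this produces $\abs{\det A}\,\abs{\det B}$, while on the left the $A\mathbb{Z}^{d}$-periodic sum unfolds to $\int_{\mathbb{R}^{d}}\abs{g(t)}^{2}\,dt=\left\Vert g\right\Vert ^{2}$, so that $\left\Vert g\right\Vert ^{2}=\abs{\det A\det B}$. I expect the main obstacle to be the careful bookkeeping of the two reciprocal lattices $B^{-T}\mathbb{Z}^{d}$ and $A\mathbb{Z}^{d}$ together with their covolumes, and the rigorous justification of the small-support reduction and of the passage from the integrated identity (valid for all such $f$) to the almost-everywhere pointwise identity; the interchange of summation and integration itself is harmless because the integrand is nonnegative.
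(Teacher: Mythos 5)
Your argument is correct, but it takes a genuinely different route from the paper: the paper proves Proposition \ref{norm} purely by citation, referring the reader to the proof of Theorem 1.3 in \cite{Han}, whereas you give a self-contained periodization proof. Your chain of steps is sound: the characters $e^{2\pi i\langle t,y\rangle}$, $y\in B\mathbb{Z}^{d}$, are (after multiplication by $\left\vert \det B\right\vert ^{1/2}$) an orthonormal basis of $L^{2}$ of a fundamental domain of the dual lattice $B^{-T}\mathbb{Z}^{d}$, whose volume is $\left\vert \det B\right\vert ^{-1}$; restricting to $f$ whose support meets each $B^{-T}\mathbb{Z}^{d}$-orbit at most once kills the cross terms in the periodization of $t\mapsto f(t)\overline{g(t-x)}$; Tonelli then gives the weighted identity, and integrating the resulting a.e.\ identity $\sum_{x\in A\mathbb{Z}^{d}}\left\vert g(t-x)\right\vert ^{2}=\left\vert \det B\right\vert$ over a fundamental domain of $A\mathbb{Z}^{d}$ (volume $\left\vert \det A\right\vert$) unfolds to $\left\Vert g\right\Vert ^{2}=\left\vert \det A\det B\right\vert$. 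Two points you flagged do deserve explicit care in a write-up. First, $f(t)\overline{g(t-x)}$ is a product of two $L^{2}$ functions, hence a priori only $L^{1}$ on the fundamental domain, so Parseval for Fourier series must be invoked in the extended $[0,\infty]$ sense (if the coefficient sum is finite, Riesz--Fischer plus uniqueness of Fourier coefficients of $L^{1}$ functions force the function into $L^{2}$ with equality); this is harmless here since the full double sum equals $\left\Vert f\right\Vert ^{2}<\infty$. Second, passing from the integrated identity to the pointwise one requires that the periodization $G(t)=\sum_{x\in A\mathbb{Z}^{d}}\left\vert g(t-x)\right\vert ^{2}$ be locally integrable, which holds because its integral over one $A\mathbb{Z}^{d}$-period is $\left\Vert g\right\Vert ^{2}$; testing against indicators of small sets and Lebesgue differentiation then give $G=\left\vert \det B\right\vert$ a.e. What your route buys is an elementary, fully self-contained proof that also yields the stronger intermediate fact that the $A\mathbb{Z}^{d}$-periodization of $\left\vert g\right\vert ^{2}$ is constant; what the paper's citation buys is brevity and access to the broader Han--Wang tiling machinery, of which this norm identity is a small by-product.
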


\begin{proof}
See proof of Theorem 1.3 in \cite{Han}.
\end{proof}

Let $\psi\in L^{2}\left(  \mathbb{R}\right)  .$ In the classical sense, we say
$\psi$ is a \textbf{wavelet} iff the system
\begin{equation}
\left\{  \psi_{jk}\left(  x\right)  =2^{j/2}\psi\left(  2^{j}x-k\right)
:j,k\in%
\mathbb{Z}
\right\}  \label{wav}%
\end{equation}
forms an orthonormal basis in $L^{2}\left(
\mathbb{R}
\right)  ,$ and we say $\psi$ is a (Parseval) \textbf{frame wavelet }iff the
system (\ref{wav}) forms a (Parseval) frame in $L^{2}\left(
\mathbb{R}
\right)  .$ There are several ways to construct wavelets in $L^{2}\left(
\mathbb{R}
\right)  $. For example the concept of wavelet sets is exposed in
\cite{Speegle}. Also, the oldest known wavelet is the \textbf{Haar wavelet},
given by
\[
\psi\left(  x\right)  =\left\{
\begin{array}
[c]{c}%
1\text{ if }x\in\left[  0,1/2\right) \\
-1\text{ if }x\in\left[  1/2,1\right) \\
0\text{ if }x\in%
\mathbb{R}
\backslash\left(  0,1\right)
\end{array}
\right.  .
\]
The Haar wavelet has been discovered in 1910 way before the concepts of
wavelets were developed. Another well-known example is the function $\psi$
whose Fourier transform is the characteristic function of the
\textbf{Littlewood-Paley wavelet set} $\left[  -2\pi,-\pi\right)  \cup\left[
\pi,2\pi\right)  ,$ and
\[
\psi\left(  x\right)  =\frac{\sin\left(  2\pi x\right)  -\sin\left(  \pi
x\right)  }{\pi x}%
\]
is a wavelet in $L^{2}\left(  \mathbb{R}\right)  .$ The Littlewood-Paley
wavelet set will be of special interest in this paper.

\section{Analysis on nilpotent Lie groups}

The unitary dual of a simply connected, connected nilpotent Lie group is
well-understood via the \textbf{orbit method \cite{Corwin}}. The orbit method
is simply stated in the following terms. Up to isomorphism, the unitary
irreducible representations of any simply connected, connected nilpotent Lie
group are in a one-to-one correspondence with the coadjoint orbits of the Lie
group on elements of the dual of its Lie algebra. In other words, if two
linear functionals belong to the same orbit, their corresponding unitary
irreducible representations must be isomorphic. This correspondence is known
as \textbf{Kirillov's map}. If $N$ is a nilpotent Lie group, the Fourier
transform just like on Euclidean spaces is really defined on $L^{1}\left(
N\right)  \cap L^{2}\left(  N\right)  $ as
\[
\mathcal{F}f\left(  \lambda\right)  =\int_{N}f\left(  n\right)  \pi_{\lambda
}\left(  n\right)  dn
\]
where $\left(  \pi_{\lambda},\mathcal{H}_{\lambda}\right)  $ is an irreducible
representation corresponding to the linear functional $\lambda$ via Kirillov's
map. Clearly, the Fourier transform, as given above is weakly defined and
should be understood as follows. Let $u,v$ be $2$ arbitrary vectors in the
Hilbert space on which we realized the action of $\pi_{\lambda}.$ We have
\[
\left\langle \mathcal{F}f\left(  \lambda\right)  u,v\right\rangle =\int
_{N}f\left(  n\right)  \left\langle \pi_{\lambda}\left(  n\right)
u,v\right\rangle dn.
\]
Since $L^{1}\left(  N\right)  \cap L^{2}\left(  N\right)  $ is dense in
$L^{2}\left(  N\right)  ,$ the extension of the Fourier transform on
$L^{2}\left(  N\right)  $ is naturally called the \textbf{Plancherel
transform}, which we will denote in this paper by $\mathcal{P}.$ $\mathcal{P}$
induces an isometry on $L^{2}\left(  N\right)  ,$ and if $\Sigma$ is a
parametrizing set for the unitary dual of $N,$ there exists a measure called
the Plancherel measure such that
\[
\mathcal{P}\left(  L^{2}\left(  N\right)  \right)  =\int_{\Sigma}^{\oplus
}\left(  \mathcal{H}_{\lambda}\otimes\mathcal{H}_{\lambda}\right)  d\mu\left(
\lambda\right)  .
\]
Moreover, letting $\left(  L,L^{2}\left(  N\right)  \right)  $ be the left
regular representation of $N,$
\[
\mathcal{P}\circ L\circ\mathcal{P}^{-1}=\int_{\Sigma}^{\oplus}\left(
\pi_{\lambda}\otimes1_{\mathcal{H}_{\lambda}}\right)  d\mu\left(
\lambda\right)  ,
\]
where $1_{\mathcal{H}_{\lambda}}$ is the identity operator defined on
$\mathcal{H}_{\lambda},$ we refer the interested reader to \cite{Corwin} which
is a standard reference book for the representation theory of nilpotent Lie groups.

\begin{definition}
Let $\mathfrak{g}$ be an $n$-dimensional Lie algebra over the reals. We say
that $\mathfrak{g}$ has a rational structure if and only if there exists an $%
\mathbb{R}
$-basis $\left\{  V_{1},V_{2},\cdots,V_{n}\right\}  $ for $\mathfrak{g}$
having rational structure constants, and $\mathfrak{g}_{%
\mathbb{Q}
}=%
\mathbb{Q}
$-span$\left\{  V_{1},V_{2},\cdots,V_{n}\right\}  $ provides a rational
structure such that $\mathfrak{g\cong g}_{%
\mathbb{Q}
}\otimes%
\mathbb{R}
.$
\end{definition}

\begin{definition}
Let $G$ be a nilpotent Lie group. A \textbf{lattice subgroup} is a uniform
subgroup $\Gamma$ of $G$ such that $\Lambda=\log\Gamma$ is an additive
subgroup of $\mathfrak{g.}$
\end{definition}

\begin{definition}
Let $G$ be a second-countable, unimodular locally compact group. An
irreducible representation $\pi$ of $G$ acting in $H_{\pi}$ is said to be
square-integrable if for every $u,v\in H_{\pi}$ the matrix coefficient
function $x\mapsto\left\langle \pi\left(  x\right)  u,v\right\rangle $ is in
$L^{2}\left(  G\right)  .$
\end{definition}

Square-integrable modulo the center nilpotent Lie groups are rather appealing
compared to other types of nilpotent Lie groups because their unitary duals
admit simpler descriptions, and they are essentially identified with Zariski
open sets of Euclidean spaces.

Let $N$ be a simply connected, connected, non commutative nilpotent Lie group
with Lie algebra $\mathfrak{n}$ of dimension $n$ over $%
\mathbb{R}
,$ with a rational structure. We start by fixing an ordered Jordan H\"{o}lder
basis
\[
J=\left\{  B_{1},B_{2},\cdots,B_{n}\right\}  =\left\{  Z_{1},Z_{2}%
,\cdots,Z_{n-2d},Y_{1},\cdots,Y_{d},X_{1},\cdots,X_{d}\right\}  ,
\]
such that

\begin{description}
\item[C1.] $\mathfrak{n=z}\left(  \mathfrak{n}\right)  \oplus\mathfrak{a}%
\oplus\mathfrak{b}$ where, $%
\mathbb{R}
$-span $\left\{  Z_{1},Z_{2},\cdots,Z_{n-2d}\right\}  =\mathfrak{z}\left(
\mathfrak{n}\right)  ,$ $%
\mathbb{R}
$-span $\left\{  Y_{1},Y_{2},\cdots,Y_{d}\right\}  =\mathfrak{a},$ and $%
\mathbb{R}
$-span $\left\{  X_{1},X_{2},\cdots,X_{d}\right\}  =\mathfrak{b}.$

\item[C2.] $\mathfrak{z}\left(  \mathfrak{n}\right)  \oplus\mathfrak{a}$ is a
commutative ideal of $\mathfrak{n,}$ and $\mathfrak{b}$ is a commutative
subalgebra (not an ideal) of $\mathfrak{n}$ such that $\left[  \mathfrak{a,b}%
\right]  \subseteq\mathfrak{z}\left(  \mathfrak{n}\right)  .$

\item[C3.] Defining the matrix $M\left(  J\right)  $ of structure constants
related to $J$ such that $M\left(  J\right)  _{i,j}=\left[  B_{i}%
,B_{j}\right]  , $ and letting $\mathbf{0}_{p\times p}$ be the zero matrix of
order $p,$ we assume that
\[
M\left(  J\right)  =\left(
\begin{array}
[c]{ccc}%
\mathbf{0}_{n-2d\times n-2d} & \mathbf{0}_{n-2d\times d} & \mathbf{0}%
_{n-2d\times d}\\
\mathbf{0}_{d\times n-2d} & \mathbf{0}_{d\times d} & -V\\
\mathbf{0}_{d\times n-2d} & V & \mathbf{0}_{d\times d}%
\end{array}
\right)  ,
\]
and we define the matrix%
\[
V=\left(
\begin{array}
[c]{ccc}%
\left[  X_{1},Y_{1}\right]  & \cdots & \left[  X_{1},Y_{d}\right] \\
\vdots & \ddots & \vdots\\
\left[  X_{d},Y_{1}\right]  & \cdots & \left[  X_{d},Y_{d}\right]
\end{array}
\right)  ,
\]
such that $\det\left(  V\right)  $ is a nonzero polynomial with rational
coefficients defined over $\left[  \mathfrak{n,n}\right]  .$
\end{description}

Let $\mathfrak{n}^{\ast}$ the dual vector space of $\mathfrak{n}.$ The
\textbf{coadjoint action} of $N$ on $\mathfrak{n}^{\ast}$ is denoted
multiplicatively such that for any given $\exp W\in N$ and $\lambda
\in\mathfrak{n}^{\ast}$
\begin{equation}
\exp W\cdot\lambda\left(
{\displaystyle\sum\limits_{k=1}^{n}}
u_{k}U_{k}\right)  =\lambda\left(  e^{ad_{-W}}\left(
{\displaystyle\sum\limits_{k=1}^{n}}
u_{k}U_{k}\right)  \right)  . \label{coadjoint}%
\end{equation}

\begin{lemma}
If a Lie group $N$ satisfies conditions C1, C2, and C3, then $N$ is a step
two, square-integrable modulo the center nilpotent Lie group. Moreover, for
any element $\lambda\in\mathfrak{n}^{\ast}$ we define the matrix
\begin{equation}
B\left(  \lambda\right)  =\left(
\begin{array}
[c]{ccc}%
\lambda\left[  X_{1},Y_{1}\right]  & \cdots & \lambda\left[  X_{1}%
,Y_{d}\right] \\
\vdots & \ddots & \vdots\\
\lambda\left[  X_{d},Y_{1}\right]  & \cdots & \lambda\left[  X_{d}%
,Y_{d}\right]
\end{array}
\right)  \label{B}%
\end{equation}
such that $\det B\left(  \lambda\right)  $ is a nonzero polynomial defined
over $\left[  \mathfrak{n,n}\right]  ^{\ast}.$ The dual of $N$ denoted
$\widehat{N}$ is up to a\ null set parametrized by the manifold
\begin{equation}
\Sigma=\left\{  \lambda\in\mathfrak{n}^{\ast}:\det B\left(  \lambda\right)
\neq0,\lambda\left(  X_{i}\right)  =\lambda\left(  Y_{i}\right)  =0,1\leq
i,j\leq d\right\}  , \label{sigma}%
\end{equation}
which is a Zariski open subset of $\mathfrak{z}\left(  \mathfrak{n}\right)
^{\ast}.$ Also, the Plancherel measure (associated to our fixed
Jordan-H\"{o}lder basis) is given by $d\mu\left(  \lambda\right)  =\left\vert
\det B\left(  \lambda\right)  \right\vert d\lambda$ where $d\lambda$ is the
canonical Lebesgue measure defined on $\Sigma.$
\end{lemma}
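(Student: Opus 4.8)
The plan is to verify each assertion of the lemma by directly exploiting the structural conditions C1--C3, which already encode almost everything needed. I will organize the proof around four claims: (i) $N$ is two-step nilpotent; (ii) $N$ is square-integrable modulo its center; (iii) the parametrizing manifold for $\widehat N$ is $\Sigma$ as displayed; and (iv) the Plancherel measure has the stated density.

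First I would establish the two-step property. By condition C3, the matrix $M(J)$ of structure constants has nonzero entries only in the off-diagonal blocks $\pm V$ coming from brackets $[X_i,Y_j]$, all of which land in $\mathfrak z(\mathfrak n)$ by C2 (since $[\mathfrak a,\mathfrak b]\subseteq\mathfrak z(\mathfrak n)$). Hence $[\mathfrak n,\mathfrak n]\subseteq\mathfrak z(\mathfrak n)$, which immediately gives $[\mathfrak n,[\mathfrak n,\mathfrak n]]=0$, so $\mathfrak n$ is nilpotent of step (at most) two; noncommutativity of $N$ rules out step one, so the step is exactly two. Next, to show square-integrability modulo the center, I would invoke the standard criterion (see \cite{Corwin}): a nilpotent Lie group is square-integrable modulo its center if and only if there is a linear functional $\lambda$ whose associated skew-symmetric bilinear form $B_\lambda(U,W)=\lambda([U,W])$ is nondegenerate on $\mathfrak n/\mathfrak z(\mathfrak n)$. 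Choosing coordinates via the Jordan--H\"older basis, the matrix of $B_\lambda$ restricted to $\mathrm{span}\{X_1,\dots,X_d,Y_1,\dots,Y_d\}$ is, up to sign and block arrangement dictated by $M(J)$, built from the block $B(\lambda)$ in \eqref{B}. Since $\det V$ is a nonzero polynomial over $[\mathfrak n,\mathfrak n]$, the polynomial $\lambda\mapsto\det B(\lambda)$ is not identically zero, so generic $\lambda$ make $B_\lambda$ nondegenerate modulo the center; this is exactly the square-integrability condition.

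For the description of $\widehat N$ I would appeal to Kirillov theory. The irreducible representations are parametrized by coadjoint orbits, and for a group square-integrable modulo the center the generic orbits through points of $\mathfrak z(\mathfrak n)^\ast$ are in bijection (up to a null set) with the points of $\mathfrak z(\mathfrak n)^\ast$ on which $B(\lambda)$ is nondegenerate. Concretely, I would fix a cross-section to the coadjoint action by imposing the linear conditions $\lambda(X_i)=\lambda(Y_i)=0$, which kills the non-central directions and lands $\lambda$ in $\mathfrak z(\mathfrak n)^\ast$; the remaining open condition $\det B(\lambda)\neq0$ cuts out the Zariski-open, conull set $\Sigma$ of \eqref{sigma}. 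I would then check that $\Sigma$ is a genuine cross-section for the generic coadjoint orbits, i.e.\ that each such orbit meets the affine subspace $\{\lambda(X_i)=\lambda(Y_i)=0\}$ in exactly one point, which follows because the $X$- and $Y$-directions are precisely the directions moved by the coadjoint action when $B(\lambda)$ is invertible.

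Finally, for the Plancherel density I would use the general formula for the Plancherel measure of a nilpotent Lie group in terms of the Pfaffian (or determinant) of the structure matrix along the parametrizing set; for groups square-integrable modulo the center the density with respect to Lebesgue measure $d\lambda$ on $\Sigma\subseteq\mathfrak z(\mathfrak n)^\ast$ is $|\mathrm{Pf}(B_\lambda)|$, and for the block form here this Pfaffian equals $|\det B(\lambda)|$ up to the normalization fixed by the choice of Jordan--H\"older basis. I expect the main obstacle to be the bookkeeping in step (iii)--(iv): verifying that $\Sigma$ is an exact cross-section (not merely a conull transversal) and pinning down the density to be precisely $|\det B(\lambda)|$ rather than a scalar multiple, since both depend delicately on the normalization of Haar/Lebesgue measure and the ordering of the basis $J$. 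These normalization issues, rather than any deep structural difficulty, are where the care is needed; the two-step and square-integrability assertions follow quickly from C1--C3.
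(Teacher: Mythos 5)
Your proposal is correct and takes essentially the same route as the paper: both deduce the two-step property from $[\mathfrak{n},\mathfrak{n}]\subseteq\mathfrak{z}\left(  \mathfrak{n}\right)  $ together with noncommutativity, both establish square-integrability modulo the center by the Corwin--Greenleaf criterion (nondegeneracy of the form $\lambda\left[  \cdot,\cdot\right]  $ on $\mathfrak{n}/\mathfrak{z}\left(  \mathfrak{n}\right)  $, which is exactly the null-space condition of Corollary 5.4.4 in \cite{Corwin} that the paper checks via the rank of $M\left(  J\right)  $), and both obtain the cross-section $\Sigma$ and the density $\left\vert \det B\left(  \lambda\right)  \right\vert $ from the standard orbit-method machinery. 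The only difference is one of detail: where the paper simply cites Chapters 3 and 4 of \cite{Corwin} for the cross-section algorithm and the Plancherel formula, you sketch the flat-orbit transversality argument and the Pfaffian computation yourself, which is a legitimate filling-in of the same argument rather than a different one.
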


\begin{proof}
Clearly, $N$ is a step-two nilpotent Lie group because, it is non-commutative
and $\left[  \mathfrak{n},\mathfrak{n}\right]  \subseteq\mathfrak{z}\left(
\mathfrak{n}\right)  .$ Next, $N$ being a nilpotent Lie group, according to
the orbit method, its unitary dual is in one-to-one correspondence with the
coadjoint orbits of $N$ in $\mathfrak{n}^{\ast}$. An algorithm for the
computation of a smooth cross-section parameterizing (up to a null set) almost
all of the irreducible representations is available in Chapter 3 of
\cite{Corwin}. Furthermore, a formula for the computation of the Plancherel
measure is also available in \cite{Corwin} (Chapter 4). To show that $N$ is
square-integrable modulo the center, according to 5.4.4 Corollary in
\cite{Corwin}, it suffices to show that the null-space of the matrix $\left(
\lambda\left[  B_{i},B_{j}\right]  \right)  _{1\leq i,j\leq n}$ is equal to
the central ideal $\mathfrak{z}\left(  \mathfrak{n}\right)  .$ This is clearly
true since by assumption,
\[
\mathrm{rank}\left(  \lambda\left[  B_{i},B_{j}\right]  \right)  _{1\leq
i,j\leq n}=\mathrm{rank}\left(  M\left(  J\right)  \right)  =2d,
\]
and the first $n-2d$ columns of the matrix $\left(  \lambda\left[  B_{i}%
,B_{j}\right]  \right)  _{1\leq i,j\leq n}$ are all zeros (while the remaining
$2d$ columns are linearly independent).
\end{proof}

From now on, we may just assume that $N$ is a simply connected, connected
nilpotent Lie group endowed with a rational structure satisfying conditions
C1, C2 and C3 as defined previously.

\begin{lemma}
For a fixed linear functional $\lambda\in\Sigma$ (see \ref{sigma})$,$ a
corresponding irreducible representation of $N$ is denoted $\pi_{\lambda}$ and
is realized as acting in $L^{2}\left(
\mathbb{R}
^{d}\right)  $ such that for $\phi\in L^{2}\left(
\mathbb{R}
^{d}\right)  ,$
\begin{align}
\pi_{\lambda}\left(  \exp\left(  z_{1}Z_{1}+\cdots+z_{n-2d}Z_{n-2d}\right)
\right)  \phi\left(  t\right)   &  =\exp\left(  2\pi i\left\langle
\lambda,z\right\rangle \right)  \phi\left(  t\right)  , \label{representation}%
\\
\pi_{\lambda}\left(  \exp\left(  y_{1}Y_{1}+\cdots+y_{d}Y_{d}\right)  \right)
\phi\left(  t\right)   &  =\exp\left(  2\pi i\left\langle t,B\left(
\lambda\right)  y\right\rangle \right)  \phi\left(  t\right)  ,\nonumber\\
\pi_{\lambda}\left(  \exp x_{1}X_{1}+\cdots+x_{d}X_{d}\right)  \phi\left(
t\right)   &  =\phi\left(  t-x\right)  ,\nonumber
\end{align}
where $z=\left(  z_{1},\cdots,z_{n-2d}\right)  \in%
\mathbb{R}
^{n-2d},$ $y=\left(  y_{1},\cdots,y_{d}\right)  \in%
\mathbb{R}
^{d},$ $x=\left(  x_{1},\cdots,x_{d}\right)  \in%
\mathbb{R}
^{d}.$
\end{lemma}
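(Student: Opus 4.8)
The plan is to explicitly construct the representation $\pi_\lambda$ via Kirillov's orbit method, then verify that the stated formulas give a genuine unitary representation of $N$. First I would recall the coadjoint-orbit picture: fixing $\lambda\in\Sigma$, the linear functional vanishes on $\mathfrak{b}$ and $\mathfrak{a}$ but is nontrivial on $\mathfrak{z}(\mathfrak{n})$, and I would produce a polarizing subalgebra. The natural candidate is $\mathfrak{m}=\mathfrak{z}(\mathfrak{n})\oplus\mathfrak{a}$, which by condition C2 is a commutative ideal, hence automatically a subordinate (indeed maximal isotropic) subalgebra for $\lambda$ since $\lambda[\mathfrak{m},\mathfrak{m}]=0$. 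Setting $M=\exp\mathfrak{m}$, the representation is then $\pi_\lambda=\mathrm{Ind}_M^N\chi_\lambda$, where $\chi_\lambda(\exp W)=e^{2\pi i\langle\lambda,W\rangle}$ is the unitary character of $M$ determined by $\lambda$. Because $N/M\cong\exp\mathfrak{b}\cong\mathbb{R}^d$ via the exponential coordinates $x\mapsto\exp(x_1X_1+\cdots+x_dX_d)$, the induced representation realizes naturally on $L^2(\mathbb{R}^d)$, matching the stated Hilbert space.

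Next I would compute the three displayed actions directly from the induced-representation formula. The action of $\exp(\sum x_kX_k)\in\exp\mathfrak{b}$ is the easy one: since $\mathfrak{b}$ is transverse to $M$, the group acts by translation on the coset space, giving $\phi(t)\mapsto\phi(t-x)$; here one uses that $\mathfrak{b}$ is a subalgebra (C2) so that the translation closes up, and that $\det(\mathrm{Ad})=1$ on a nilpotent group so no modular factor appears. For the central part $\exp(\sum z_kZ_k)$, this element lies in $M$ and is central, so it acts by the scalar $\chi_\lambda=e^{2\pi i\langle\lambda,z\rangle}$ independent of $t$, which is exactly the first formula. The interesting computation is the action of $\exp(\sum y_kY_k)\in\exp\mathfrak{a}\subset M$: although $\mathfrak{a}\subset M$, conjugating it past the transversal coset representative $\exp(\sum t_jX_j)$ picks up a commutator in $\mathfrak{z}(\mathfrak{n})$ governed by $[\mathfrak{a},\mathfrak{b}]$, and evaluating $\lambda$ on this commutator produces the phase $e^{2\pi i\langle t,B(\lambda)y\rangle}$ with $B(\lambda)$ exactly the matrix $(\lambda[X_i,Y_j])$ from the previous lemma. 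This is where the step-two hypothesis is essential: because the group is two-step, the Baker--Campbell--Hausdorff formula truncates and the commutator $[\sum t_jX_j,\sum y_kY_k]$ lands in the center, so the cocycle is linear in $t$ and the phase has precisely the bilinear form claimed.

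I expect the main obstacle to be the bookkeeping in the cocycle computation for the $\mathfrak{a}$-action, namely organizing the BCH expansion
\[
\exp\Bigl(\textstyle\sum t_jX_j\Bigr)^{-1}\exp\Bigl(\sum y_kY_k\Bigr)\exp\Bigl(\sum t_jX_j\Bigr)
=\exp\Bigl(\sum y_kY_k+\bigl[\,\cdot\,,\,\cdot\,\bigr]\Bigr)
\]
and matching the resulting central term against the definition of $B(\lambda)$ so that the signs and the role of $V$ versus $-V$ in $M(J)$ come out correctly. To finish, I would check that $\pi_\lambda$ is irreducible and that distinct $\lambda\in\Sigma$ give inequivalent representations: irreducibility follows from Kirillov's theorem once $\mathfrak{m}$ is verified to be a genuine polarization (maximal subordinate subalgebra), and the nondegeneracy $\det B(\lambda)\neq0$ on $\Sigma$ guarantees that $\mathfrak{z}(\mathfrak{n})$ is exactly the radical of the alternating form $\lambda[\,\cdot\,,\,\cdot\,]$, so the orbit through $\lambda$ has the right dimension $2d$ and the parametrization by $\Sigma$ from the preceding lemma is consistent. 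The verification that the three operators satisfy the group law of $N$ — i.e. that they intertwine correctly under multiplication in $N$ — is then a routine consequence of the induced-representation construction, but I would state it explicitly as the final confirmation that (\ref{representation}) defines the claimed realization.
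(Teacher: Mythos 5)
The paper gives no proof of this lemma at all: it is stated and then deferred to Chapter 2 of \cite{Corwin} and to \cite{Vignon}. Your proposal supplies exactly the argument those references carry out, so your route is the standard one, and it is correct. The key points all check: $\mathfrak{m}=\mathfrak{z}\left(  \mathfrak{n}\right)  \oplus\mathfrak{a}$ is an ideal with $\lambda\left[  \mathfrak{m},\mathfrak{m}\right]  =0$; since $\det B\left(  \lambda\right)  \neq0$ on $\Sigma$, the radical of the skew form $\lambda\left[  \cdot,\cdot\right]  $ is exactly $\mathfrak{z}\left(  \mathfrak{n}\right)  $, so maximal isotropic subspaces have dimension $\left(  n-2d\right)  +d=n-d=\dim\mathfrak{m}$, making $\mathfrak{m}$ a genuine polarization; the decomposition $N=\exp\left(  \mathfrak{b}\right)  M$ with $M=\exp\mathfrak{m}$ normal realizes $\mathrm{Ind}_{M}^{N}\chi_{\lambda}$ on $L^{2}\left(  \mathbb{R}^{d}\right)  $ with no modular corrections (everything is unimodular); and the step-two truncation of Baker--Campbell--Hausdorff, via $\left[  tX,yY\right]  =\sum_{j,k}t_{j}y_{k}\left[  X_{j},Y_{k}\right]  \in\mathfrak{z}\left(  \mathfrak{n}\right)  $, produces the phase $\left\langle t,B\left(  \lambda\right)  y\right\rangle $. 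Irreducibility is then Kirillov's theorem, as you say.

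One further point: your instinct that the sign bookkeeping is the main obstacle is more on target than you may realize, because the three formulas as printed in the lemma are not mutually consistent. In the group, $\exp\left(  xX\right)  \exp\left(  yY\right)  \exp\left(  -xX\right)  \exp\left(  -yY\right)  =\exp\left(  \left[  xX,yY\right]  \right)  $; but computing the corresponding commutator of operators from the second and third displayed formulas gives multiplication by $e^{-2\pi i\left\langle x,B\left(  \lambda\right)  y\right\rangle }$, whereas the first formula applied to the central element $\exp\left(  \left[  xX,yY\right]  \right)  $ gives $e^{+2\pi i\left\langle x,B\left(  \lambda\right)  y\right\rangle }$. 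So exactly one of the three signs must be flipped for $\pi_{\lambda}$ to be a homomorphism. Your induced-representation computation, with the convention $F\left(  nm\right)  =\chi_{\lambda}\left(  m\right)  ^{-1}F\left(  n\right)  $ and $\left(  \pi_{\lambda}\left(  g\right)  F\right)  \left(  n\right)  =F\left(  g^{-1}n\right)  $, yields the consistent variant with central character $e^{2\pi i\left\langle \lambda,z\right\rangle }$, translation $\phi\left(  t-x\right)  $, and modulation $e^{-2\pi i\left\langle t,B\left(  \lambda\right)  y\right\rangle }$; the other consistent variants are unitarily equivalent to it after the relabelling $\lambda\mapsto-\lambda$, which preserves $\Sigma$ and the Plancherel measure. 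This discrepancy is harmless for the rest of the paper, since only the Gabor-system structure $\mathcal{G}\left(  \phi,\mathbb{Z}^{d}\times B\left(  \lambda\right)  \mathbb{Z}^{d}\right)  $ of the restriction of $\pi_{\lambda}$ to $\Gamma_{2}\Gamma_{3}$ is ever used, but a complete write-up of your proof would surface it and should state which sign convention is adopted.
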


For a proof of the lemma, we invite the reader to refer to Chapter 2 in
\cite{Corwin} for general nilpotent Lie groups, or \cite{Vignon} for the class
of groups considered in this paper.

\begin{remark}
From our definition of Gabor systems, for $\phi\in L^{2}\left(
\mathbb{R}
^{d}\right)  ,$
\[
\pi_{\lambda}\left(  \exp\left(
\mathbb{Z}
Y_{1}+\cdots+%
\mathbb{Z}
Y_{d}\right)  \exp\left(
\mathbb{Z}
X_{1}+\cdots+%
\mathbb{Z}
X_{d}\right)  \right)  \phi
\]
is a Gabor system in $L^{2}\left(
\mathbb{R}
^{d}\right)  $ of the type $G\left(  \phi,%
\mathbb{Z}
^{d}\times B\left(  \lambda\right)
\mathbb{Z}
^{d}\right)  .$
\end{remark}

Let $\exp U\in\mathrm{Aut}\left(  N\right)  $ such that $U$ is a derivation of
the Lie algebra of $N.$ For all $i$ such that $1\leq i\leq n$ and for any real
number $\alpha,$ the following must hold

\begin{enumerate}
\item $\left[  U,\alpha B_{i}\right]  =\alpha B_{i},$

\item $\left[  U,B_{i}+B_{j}\right]  =\left[  U,B_{i}\right]  +\left[
U,B_{j}\right]  ,$

\item $\left[  U,B_{i}\right]  =-\left[  B_{i},U\right]  ,$

\item (\textbf{Jacobi identity}) $\left[  \left[  U,B_{i}\right]
,B_{j}\right]  +\left[  \left[  B_{i},B_{j}\right]  ,U\right]  +\left[
\left[  B_{j},U\right]  ,B_{i}\right]  =0.$
\end{enumerate}

\begin{lemma}
\label{Diag}Let $\exp U\in\mathrm{Aut}\left(  N\right)  $ such that $\left[
U,X_{i}\right]  =aX_{i}$ and $\left[  U,Y_{i}\right]  =bY_{i}$ for some
$a,b\in%
\mathbb{R}
$ and for all $i$ such that $1\leq i\leq d.$ There exists a matrix
representation of the linear adjoint action of $%
\mathbb{R}
U$ in $\mathfrak{gl}\left(  \mathfrak{n}\right)  $ such that
\begin{align}
ad  &  :%
\mathbb{R}
U\rightarrow\mathrm{Diag}\left(  \mathfrak{n}\right)  \subset\mathfrak{gl}%
\left(  \mathfrak{n}\right) \label{adU}\\
U  &  \mapsto\left(
\begin{array}
[c]{ccc}%
\left(  a+b\right)  \mathbf{I}_{n-2d} & \cdots & \mathbf{0}\\
\vdots & b\mathbf{I}_{d} & \vdots\\
\mathbf{0} & \cdots & a\mathbf{I}_{d}%
\end{array}
\right)  ,\nonumber
\end{align}
where $\mathbf{I}_{q}$ represents the identity matrix of order $q$.
\end{lemma}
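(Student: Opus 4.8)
The plan is to write down the matrix of $\mathrm{ad}_{U}$ in the ordered Jordan--H\"{o}lder basis $J=\{Z_{1},\dots,Z_{n-2d},Y_{1},\dots,Y_{d},X_{1},\dots,X_{d}\}$ and to check that it is diagonal with the three scalar blocks claimed. Two of the three blocks are immediate from the hypotheses: $\left[U,Y_{i}\right]=bY_{i}$ and $\left[U,X_{i}\right]=aX_{i}$ say exactly that each $Y_{i}$ is an eigenvector of $\mathrm{ad}_{U}$ with eigenvalue $b$ and each $X_{i}$ an eigenvector with eigenvalue $a$. This produces the blocks $b\mathbf{I}_{d}$ and $a\mathbf{I}_{d}$ and shows that $\mathrm{ad}_{U}$ maps $\mathfrak{a}$ into $\mathfrak{a}$ and $\mathfrak{b}$ into $\mathfrak{b}$ with no off-diagonal entries pointing into the other generators. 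Thus everything reduces to the action of $\mathrm{ad}_{U}$ on the central block spanned by $Z_{1},\dots,Z_{n-2d}$.

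Next I would check that $\mathrm{ad}_{U}$ preserves $\mathfrak{z}(\mathfrak{n})$, so that the matrix is genuinely block-diagonal and the top-left block is a well-defined $(n-2d)\times(n-2d)$ matrix. This is a one-line consequence of the Jacobi identity (property~4 above): for central $Z$ and arbitrary $W\in\mathfrak{n}$ one gets $\left[\left[U,Z\right],W\right]=\left[U,\left[Z,W\right]\right]-\left[Z,\left[U,W\right]\right]=0$, since $Z$ is central, so $\left[U,Z\right]\in\mathfrak{z}(\mathfrak{n})$.

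The heart of the argument is the eigenvalue on the center, and the tool is once more the derivation identity applied to the only nontrivial brackets in the algebra, namely $\left[X_{i},Y_{j}\right]$. Substituting the hypotheses gives
\[
\left[U,\left[X_{i},Y_{j}\right]\right]=\left[\left[U,X_{i}\right],Y_{j}\right]+\left[X_{i},\left[U,Y_{j}\right]\right]=a\left[X_{i},Y_{j}\right]+b\left[X_{i},Y_{j}\right]=(a+b)\left[X_{i},Y_{j}\right],
\]
so $\mathrm{ad}_{U}$ acts as the scalar $(a+b)$ on every $\left[X_{i},Y_{j}\right]$, hence on all of $\left[\mathfrak{n},\mathfrak{n}\right]$, which by C3 is the $\mathbb{R}$-span of these commutators.

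The step I expect to be the main obstacle is passing from $\left[\mathfrak{n},\mathfrak{n}\right]$ to the full center, since a priori $\left[\mathfrak{n},\mathfrak{n}\right]$ may sit properly inside $\mathfrak{z}(\mathfrak{n})$ and the two hypotheses on $\mathfrak{a},\mathfrak{b}$ do not by themselves constrain $\mathrm{ad}_{U}$ on a central complement. Here I would use the ``there exists'' formulation: one is free to declare $\mathrm{ad}_{U}$ to equal $(a+b)$ on a complement of $\left[\mathfrak{n},\mathfrak{n}\right]$ in $\mathfrak{z}(\mathfrak{n})$, and the resulting linear map is still a derivation because those extra central generators occur in no nonzero bracket, so every instance of the Leibniz rule that involves them reduces to $0=0$. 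The assignment $Z\mapsto(a+b)Z$, $Y_{i}\mapsto bY_{i}$, $X_{i}\mapsto aX_{i}$ is therefore a derivation whose matrix is exactly the stated diagonal form, and since $N$ is simply connected and connected its exponential $\exp U$ is a genuine automorphism of $N$. When $\left[\mathfrak{n},\mathfrak{n}\right]=\mathfrak{z}(\mathfrak{n})$, as for the Heisenberg groups, the central block is forced and no choice is needed.
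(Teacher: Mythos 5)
Your proof is correct and takes essentially the same route as the paper's: the Jacobi identity forces the eigenvalue $a+b$ on $\left[\mathfrak{n},\mathfrak{n}\right]$, and on a central complement of $\left[\mathfrak{n},\mathfrak{n}\right]$ in $\mathfrak{z}\left(\mathfrak{n}\right)$ one exploits the ``there exists'' formulation to declare that $\mathrm{ad}_{U}$ acts by $a+b$, which violates no axiom since such elements occur in no nonzero bracket. Your extra verifications (that $\mathrm{ad}_{U}$ preserves the center and that the extended map satisfies the Leibniz rule vacuously) merely make explicit what the paper leaves implicit.
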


\begin{proof}
To prove the Lemma, it suffices to check axioms 1,2,3, and 4. Clearly, axioms
1,2, and 3 are satisfied. It remains to prove that the Jacobi identity is
satisfied as well. If $\left[  U,X_{i}\right]  =aX_{i}$ and $\left[
U,Y_{i}\right]  =bY_{i},$ by the Jacobi identity,
$
\left[  \left[  X_{i},Y_{j}\right]  ,U\right]  +\left[  \left[  Y_{j}%
,U\right]  ,X_{i}\right]  +\left[  \left[  U,X_{i}\right]  ,Y_{j}\right]
=0$ $\Rightarrow \left[  U,\left[  X_{i},Y_{j}\right]  \right]  =\left(
a+b\right)  \left[  X_{i},Y_{j}\right]  .$
For central elements, there are two separate cases to consider. First, we
suppose that $Z_{i}\in J\ $ is an element of the commutator ideal $\left[
\mathfrak{n},\mathfrak{n}\right]  \leq\mathfrak{z}\left(  \mathfrak{n}\right)
.$ That is $
Z=\sum_{1\leq i,j\leq d}\alpha_{i,j}\left[  X_{i},Y_{j}\right]  .
$ So, $
\left[  U,Z\right]  =\sum_{1\leq i,j\leq d}\alpha_{i,j}\left[  U,\left[
X_{i},Y_{j}\right]  \right]$  $=\left(  a+b\right)  \sum_{1\leq i,j\leq d}%
\alpha_{i,j}\left[  X_{i},Y_{j}\right]  =\left(  a+b\right)  Z.
$
Now assume that $Z=\sum_{1\leq i,j\leq d}\alpha_{i,j}\left[  X_{i}%
,Y_{j}\right]  +W$ such that $W$ is a central element but $W\notin\left[
\mathfrak{n},\mathfrak{n}\right]  .$ Defining $\left[  U,W\right]  =\left(
a+b\right)  W$ does not violate any of the axioms required for $U$ to induce a
linear adjoint action on the Lie algebra $\mathfrak{n.}$ Thus, there exits $U$
such that $\left[  U,Z\right]  =\left(  a+b\right)  Z.$ This completes the proof.
\end{proof}

\begin{corollary}
\label{linear} Let $\exp U\in\mathrm{Aut}\left(  N\right)  $ such that
$\left[  U,X_{i}\right]  =0$ and $\left[  U,Y_{i}\right]  =\ln\left(
2\right)  Y_{i}$ for all $i$ such that $1\leq i\leq d.$ There exists a matrix
representation of the linear adjoint action of $%
\mathbb{R}
U$ in $\mathfrak{gl}\left(  \mathfrak{n}\right)  $ such that%
\begin{align*}
ad_{U}  &  =\left(
\begin{array}
[c]{ccc}%
\ln\left(  2\right)  \mathbf{I}_{n-2d} & \cdots & \mathbf{0}\\
\vdots & \ln\left(  2\right)  \mathbf{I}_{d} & \vdots\\
\mathbf{0} & \cdots & \mathbf{0}_{d}%
\end{array}
\right)  ,\text{ and }\\
Ad_{\exp U}  &  =\left(
\begin{array}
[c]{ccc}%
2\mathbf{I}_{n-2d} & \cdots & \mathbf{0}\\
\vdots & 2\mathbf{I}_{d} & \vdots\\
\mathbf{0} & \cdots & \mathbf{I}_{d}%
\end{array}
\right)
\end{align*}
where $ad_{U}$ is the derivative of $Ad_{\exp U}.$
\end{corollary}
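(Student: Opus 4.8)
The plan is to recognize this statement as the special case of Lemma \ref{Diag} obtained by setting $a=0$ and $b=\ln(2)$. First I would verify that the hypotheses match: the assumptions $\left[U,X_i\right]=0$ and $\left[U,Y_i\right]=\ln(2)Y_i$ are precisely the conditions of Lemma \ref{Diag} with these parameter values. Consequently, Lemma \ref{Diag} already guarantees that such a derivation $U$ exists (in particular, that the Jacobi identity and the other derivation axioms are satisfied), so no independent verification of the derivation property is needed here.

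Next I would substitute $a=0$ and $b=\ln(2)$ into the block-diagonal matrix furnished by Lemma \ref{Diag}. The central block $(a+b)\mathbf{I}_{n-2d}$ becomes $\ln(2)\mathbf{I}_{n-2d}$, the $\mathfrak{a}$-block $b\mathbf{I}_d$ becomes $\ln(2)\mathbf{I}_d$, and the $\mathfrak{b}$-block $a\mathbf{I}_d$ becomes $\mathbf{0}_d$. This yields exactly the claimed expression for $ad_U$.

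To pass from $ad_U$ to $Ad_{\exp U}$, I would invoke the standard relation $Ad_{\exp U}=\exp\left(ad_U\right)$ relating the adjoint representation of the group to that of its Lie algebra. Since $ad_U$ is diagonal, its matrix exponential is computed block by block, using $\exp\left(\ln(2)\mathbf{I}_{q}\right)=2\mathbf{I}_{q}$ on the central and $\mathfrak{a}$-blocks and $\exp\left(\mathbf{0}_d\right)=\mathbf{I}_d$ on the $\mathfrak{b}$-block. This produces precisely the stated form of $Ad_{\exp U}$.

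There is no serious obstacle in this argument; it is genuinely a corollary. The only points requiring care are the clean identification of the parameters $a$ and $b$ with the prescribed eigenvalues and the routine exponentiation of a diagonal matrix. All of the substantive work—namely, establishing that a derivation $U$ with the given action on $\mathfrak{a}$ and $\mathfrak{b}$ actually exists and induces the indicated action on the center—was already completed in the proof of Lemma \ref{Diag}.
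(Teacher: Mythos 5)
Your proposal is correct and follows essentially the same route as the paper's own proof: invoke Lemma \ref{Diag} with $a=0$ and $b=\ln(2)$ to obtain $ad_{U}$, then use $Ad_{\exp U}=\exp\left(ad_{U}\right)$ and exponentiate the diagonal blocks. No gaps; your parameter substitution and blockwise exponentiation match the paper's argument exactly.
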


\begin{proof}
For the existence of
\[
ad_{U}=\left(
\begin{array}
[c]{ccc}%
\ln\left(  2\right)  \mathbf{I}_{n-2d} & \cdots & \mathbf{0}\\
\vdots & \ln\left(  2\right)  \mathbf{I}_{d} & \vdots\\
\mathbf{0} & \cdots & \mathbf{0}_{d}%
\end{array}
\right)
\]
we use Lemma \ref{Diag}. Next, since $Ad_{\exp U}=\exp ad_{U},$ we have%
\[
Ad_{\exp U}=\exp\left(
\begin{array}
[c]{ccc}%
\ln\left(  2\right)  \mathbf{I}_{n-2d} & \cdots & \mathbf{0}\\
\vdots & \ln\left(  2\right)  \mathbf{I}_{d} & \vdots\\
\mathbf{0} & \cdots & \mathbf{0}_{d}%
\end{array}
\right)  =\left(
\begin{array}
[c]{ccc}%
2\mathbf{I}_{n-2d} & \cdots & \mathbf{0}\\
\vdots & 2\mathbf{I}_{d} & \vdots\\
\mathbf{0} & \cdots & \mathbf{I}_{d}%
\end{array}
\right)  .
\]
\end{proof}

\begin{proposition}
\label{hom} Let $\mathbf{I}_{m}$ be the $d\times d$ identity matrix. Let
$\phi:%
\mathbb{R}
^{m}\rightarrow%
\mathbb{R}
$ be a \textbf{homogeneous polynomial} and let $\rho$ be the Lebesgue measure
defined on $%
\mathbb{R}
^{m}$. There exits a measurable set $E\subset%
\mathbb{R}
^{m}$ such that the collection of sets $\left\{  \left(  2\mathbf{I}%
_{m}\right)  ^{j}E:j\in%
\mathbb{Z}
\right\}  $ satisfies the following.

\begin{enumerate}
\item $\rho\left(  \left(  2\mathbf{I}_{m}\right)  ^{j}E\cap\left(
2\mathbf{I}_{m}\right)  ^{j^{\prime}}E\right)  =0$ for any $j,j^{\prime}\in%
\mathbb{Z}
,$ and $j\neq j^{\prime}.$

\item $\rho\left(
\mathbb{R}
^{m}-\cup_{j\in%
\mathbb{Z}
}\left(  2\mathbf{I}_{m}\right)  ^{j}E\right)  =0.$

\item $E\subseteq\phi^{-1}\left(  \left[  -1,1\right]  \right)  .$
\end{enumerate}
\end{proposition}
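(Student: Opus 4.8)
The plan is to exploit the homogeneity of $\phi$ in order to convert this multiplicative tiling problem for the dilation $x\mapsto 2x$ into an elementary partition of the range of $\abs{\phi}$. Write $p=\deg\phi\geq 1$, so that for every $c>0$ one has $\phi\left(cx\right)=c^{p}\phi\left(x\right)$, and in particular $\abs{\phi\left(2^{j}x\right)}=2^{jp}\abs{\phi\left(x\right)}$ for all $j\in\mathbb{Z}$. The vanishing locus $Z=\phi^{-1}\left(0\right)$ is the zero set of a nonzero polynomial, hence $\rho\left(Z\right)=0$, and it is invariant under every dilation $\left(2\mathbf{I}_{m}\right)^{j}$. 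The idea is then to slice the complement $\mathbb{R}^{m}\setminus Z$ according to the dyadic size of $\abs{\phi}$.

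Concretely, I would set
\[
E=\left\{x\in\mathbb{R}^{m}:2^{-p}<\abs{\phi\left(x\right)}\leq 1\right\},
\]
which is Borel because $\abs{\phi}$ is continuous and which visibly lies inside $\left\{\abs{\phi}\leq 1\right\}=\phi^{-1}\left(\left[-1,1\right]\right)$, so that condition (3) holds immediately. Using the scaling relation above, for $y=2^{j}x$ one has $\abs{\phi\left(2^{-j}y\right)}=2^{-jp}\abs{\phi\left(y\right)}$, whence
\[
\left(2\mathbf{I}_{m}\right)^{j}E=\left\{y\in\mathbb{R}^{m}:2^{\left(j-1\right)p}<\abs{\phi\left(y\right)}\leq 2^{jp}\right\}.
\]

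The verification now reduces to the observation that, since $p\geq 1$, the half-open intervals $\left(2^{\left(j-1\right)p},2^{jp}\right]$ are pairwise disjoint and partition $\left(0,\infty\right)$ as $j$ runs over $\mathbb{Z}$. Pairwise disjointness of these value-intervals forces the sets $\left(2\mathbf{I}_{m}\right)^{j}E$ to be genuinely disjoint, which is stronger than the null-intersection demanded in condition (1). For condition (2), the union of the dilates is exactly $\left\{y:\abs{\phi\left(y\right)}>0\right\}=\mathbb{R}^{m}\setminus Z$, so its complement is $Z$, which is null.

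The single point requiring genuine care, and the step I would flag as the main obstacle, is condition (2): it rests on the fact that the zero set of a nonzero polynomial has Lebesgue measure zero. This is precisely where the hypotheses that $\phi$ is an honest nonzero homogeneous polynomial of positive degree enter, the positive degree also being what guarantees that the scaling factor $2^{p}$ exceeds $1$, so that the value-intervals actually sweep out all of $\left(0,\infty\right)$. In the intended application $\phi=\det B\left(\lambda\right)$, which is homogeneous of degree $d\geq 1$ and not identically zero by C3, both hypotheses are satisfied.
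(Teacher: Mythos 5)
Your proof is correct, but it takes a genuinely different route from the paper's. The paper starts from the fixed dyadic annulus $S=\left[  -1/2,1/2\right]  ^{m}\backslash\left[  -1/4,1/4\right]  ^{m}$, whose dyadic dilates tile $\mathbb{R}^{m}$ up to a null set; if $S$ fails condition (3), it invokes continuity of $\phi$ together with $\phi\left(  0\right)  =0$ to find a cube $\left(  -\epsilon,\epsilon\right)  ^{m}\subset\phi^{-1}\left(  \left[  -1,1\right]  \right)  $, and then sets $E=\left(  2\mathbf{I}_{m}\right)  ^{j\left(  \epsilon\right)  }S$ for a suitable $j\left(  \epsilon\right)  $, the point being that a fixed dyadic rescaling of a dyadic tiling is again a dyadic tiling. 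You instead exploit homogeneity algebraically: your $E$ is the level-set annulus $\left\{  2^{-p}<\left\vert \phi\right\vert \leq1\right\}  $, whose dilates are exactly the annuli $\left\{  2^{\left(  j-1\right)  p}<\left\vert \phi\right\vert \leq2^{jp}\right\}  $, which are genuinely pairwise disjoint (stronger than the almost-everywhere disjointness required) and whose union is exactly the complement of the zero set of $\phi$, a null set since $\phi$ is a nonzero polynomial. The trade-offs are as follows. The paper's argument uses homogeneity and polynomiality only through $\phi\left(  0\right)  =0$ and continuity, so it is more elementary and covers degenerate cases your construction does not; moreover its $E$ is a geometrically explicit rescaled cube annulus, which is the shape actually used downstream in Lemma \ref{density} and in the definition (\ref{wset}). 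Your argument requires $\phi$ to be a nonzero homogeneous polynomial of positive degree (which you correctly flag, and which holds for $\phi=\det B\left(  \lambda\right)  $ by C3), but in exchange it avoids the case split, yields exact rather than almost-everywhere statements, and identifies precisely what is missed by the union. Both arguments are valid proofs of the proposition as it is applied in the paper.
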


\begin{proof}
First, notice that (1), and (2) together is equivalent to the fact that the
collection of sets $\left\{  \left(  2\mathbf{I}_{m}\right)  ^{j}E:j\in%
\mathbb{Z}
\right\}  $ forms a measurable partition of $%
\mathbb{R}
^{m}.$ Let $S=\left[  -1/2,1/2\right]  ^{m}\backslash\left[  -1/4,1/4\right]
^{m}\subset\mathbb{R}^{m}.$ Clearly $S$ satisfies conditions (1) and (2). If
$S\subseteq\phi^{-1}\left(  \left[  -1,1\right]  \right)  $ then, we are done.
Now assume that $S\nsubseteq\phi^{-1}\left[  -1,1\right]  .$ $\phi$ being a
continuous map, there exists $\epsilon>0$ such that $\phi^{-1}\left[
-1,1\right]  \supset\left(  -\epsilon,\epsilon\right)  ^{m}.$ We pick
$j=j\left(  \epsilon\right)  \in%
\mathbb{Z}
$ such that $\left(  2\mathbf{I}_{m}\right)  ^{j\left(  \epsilon\right)
}S\subset\left(  -\epsilon,\epsilon\right)  ^{m},$ and we let $E=\left(
2\mathbf{I}_{m}\right)  ^{j\left(  \epsilon\right)  }S.$ It is now clear that
$E$ satisfies conditions (1),(2), and (3), and the proposition is proved.
\end{proof}

\begin{remark}
From now on, we identify the spectrum, $\Sigma$ (\ref{sigma}) with an open
subset of $%
\mathbb{R}
^{n-2d}\cong\mathfrak{z}\left(  \mathfrak{n}\right)  ^{\ast}.$ Thus, we abuse
the notation when we make the following statement:
\begin{equation}
\Sigma=\left\{  \left(  \lambda_{1},\cdots,\lambda_{n-2d},0,\cdots,0\right)
:\lambda_{i}\in%
\mathbb{R}
\right\}  \subset%
\mathbb{R}
^{n-2d} \label{identification}%
\end{equation}
It should be understood that, we are assuming that the correct identification
is made for $\Sigma$ by suppressing all of the zero coordinates$.$ Otherwise,
(\ref{identification}) makes no sense of course.
\end{remark}

Now, we would like to specialize Prop \ref{hom} to the class of nilpotent Lie
groups considered in this paper.

\begin{lemma}
\label{density} Identifying $\Sigma$ with a Zariski open subset of $%
\mathbb{R}
^{n-2d}\equiv\mathfrak{z}\left(  \mathfrak{n}\right)  ^{\ast},$ there exists a
Jordan-Holder basis for the Lie algebra $\mathfrak{n}$ of $N$ such that for
\[
\lambda\in\left(  \left[  -1/2,1/2\right]  ^{n-2d}\backslash\left[
-1/4,1/4\right]  ^{n-2d}\right)  \cap\Sigma,
\]
we have $\left\vert \det B\left(  \lambda\right)  \right\vert \leq1.$
\end{lemma}

\begin{proof}
Referring to (\ref{B}) we recall that
\[
B\left(  \lambda\right)  =\left(
\begin{array}
[c]{ccc}%
\lambda\left[  X_{1},Y_{1}\right]  & \cdots & \lambda\left[  X_{1}%
,Y_{d}\right] \\
\vdots & \ddots & \vdots\\
\lambda\left[  X_{d},Y_{1}\right]  & \cdots & \lambda\left[  X_{d}%
,Y_{d}\right]
\end{array}
\right)  .
\]
Let $\mathbf{r}$ be a homogeneous polynomial over $\Sigma$ such that
$\mathbf{r}$ $\mathbf{:}$ $\Sigma\rightarrow%
\mathbb{R}
,$ $\lambda\mapsto\det B\left(  \lambda\right)  .$ From Lemma \ref{hom}, there
exits a measurable set $E\subset\Sigma$ (identified with a conull subset of $%
\mathbb{R}
^{n-2d}$) such that the collection of sets $\left\{  2\mathbf{I}_{m}%
^{j}\left(  E\right)  :j\in%
\mathbb{Z}
\right\}  $ satisfies all of the conditions stated in Lemma \ref{hom}. If
\begin{equation}
\left(  \left[  -1/2,1/2\right]  ^{n-2d}\backslash\left[  -1/4,1/4\right]
^{n-2d}\right)  \cap\Sigma\subset\mathbf{r}^{-1}\left[  -1,1\right]  ,
\label{r}%
\end{equation}
we are done. Otherwise, it not too hard to see that we can modify at least one
element of the Jordan-H\"{o}lder basis
\[
J=\left\{  Z_{1},Z_{2},\cdots,Z_{n-2d},Y_{1},\cdots,Y_{d},X_{1},\cdots
,X_{d}\right\}
\]
to satisfy equation \ref{r}$.$ In fact, let us assume that
\[
\left(  \left[  -1/2,1/2\right]  ^{n-2d}\backslash\left[  -1/4,1/4\right]
^{n-2d}\right)  \cap\Sigma
\]
is not contained in $\mathbf{r}^{-1}\left[  -1,1\right]  .$ Clearly,
\[
\left[  -1/2,1/2\right]  ^{n-2d}\cap\Sigma\cap\mathbf{r}^{-1}\left[
-1,1\right]
\]
$\text{ is not empty.}$ Let $\frac{1}{k}\in%
\mathbb{Q}
,$ and replace $X_{1}$ with $\frac{1}{k}X_{1},$ such that $\mathbf{r}%
_{k}\left(  \lambda\right)  =\det B_{k}\left(  \lambda\right)  $ where
\[
\det B_{k}\left(  \lambda\right)  =\det\left(
\begin{array}
[c]{ccc}%
\frac{1}{k}\lambda\left(  \left[  X_{1},Y_{1}\right]  \right)  & \cdots &
\frac{1}{k}\lambda\left(  \left[  X_{1},Y_{d}\right]  \right) \\
\vdots & \ddots & \vdots\\
\lambda\left(  \left[  X_{d},Y_{1}\right]  \right)  & \cdots & \lambda\left(
\left[  X_{d},Y_{d}\right]  \right)
\end{array}
\right)  =\frac{1}{k}\det B\left(  \lambda\right)  .
\]
As $k\rightarrow\infty,$ $\mathbf{r}_{k}^{-1}\left[  -1,1\right]
\rightarrow\Sigma.$ Certainly, since $$\left[  -1/2,1/2\right]  ^{n-2d}%
\backslash\left[  -1/4,1/4\right]  ^{n-2d}$$ is bounded, for $k$ large enough,
we obtain
\[
\left(  \left[  -1/2,1/2\right]  ^{n-2d}\backslash\left[  -1/4,1/4\right]
^{n-2d}\right)  \cap\Sigma\subseteq\mathbf{r}_{k}^{-1}\left(  \left[
-1,1\right]  \right)  .
\]
Finally, replacing
\[
J=\left\{  Z_{1},Z_{2},\cdots,Z_{n-2d},Y_{1},\cdots,Y_{d},X_{1},\cdots
,X_{d}\right\}
\]
with $\left\{  Z_{1},Z_{2},\cdots,Z_{n-2d},Y_{1},\cdots,Y_{d},\frac{1}{k}%
X_{1},\cdots,X_{d}\right\}  ,$ we complete the proof.
\end{proof}

Now, we make a choice of coordinates which will be convenient for our purpose
throughout this paper. Let $m\in N$ and $\exp U\in\mathrm{Aut}\left(
N\right)  $. We write the automorphic action induced by $A$ as follows
$Ad_{A}\left(  m\right)  =A\left(  m\right)  .$ Given
\[
m=\exp\left(  \sum_{l=1}^{n-2d}z_{l}Z_{l}\right)  \exp\left(  \sum_{k=1}%
^{d}y_{k}Y_{k}\right)  \exp\left(  \sum_{k=1}^{d}x_{k}X_{k}\right)
\]
which we identify with the vector $\left(  z_{1},\cdots,z_{n-2d},y_{1}%
,\cdots,y_{d},x_{1},\cdots,x_{d}\right)  ,$ it is easy to see that%
\[
A\left(  m\right)  =\exp\left(  \sum_{l=1}^{n-2d}2z_{l}Z_{l}\right)
\exp\left(  \sum_{k=1}^{d}2y_{k}Y_{k}\right)  \exp\left(  \sum_{k=1}^{d}%
x_{k}X_{k}\right)  .
\]

\begin{remark}
Identifying $\Sigma$ with a subset of $%
\mathbb{R}
^{n-2d},$ let $A=\exp U$ , and $\lambda\in\Sigma.$ The coadjoint action of $A
$ on $\lambda$ is computed as follows
\[
A\cdot\lambda=\exp U\cdot\left(  \lambda_{1},\cdots,\lambda_{n-2d}\right)
=\frac{1}{2}\mathbf{I}_{n-2d}\left(  \lambda_{1},\cdots,\lambda_{n-2d}\right)
.
\]

\end{remark}

\begin{definition}
We define
\begin{align}
\Gamma_{1} &  =\exp\left(  \sum_{l=1}^{n-2d}%
\mathbb{Z}
Z_{l}\right)  \in\mathfrak{z}\left(  \mathfrak{n}\right)  ,\Gamma_{2}%
=\exp\left(  \sum_{i=1}^{d}%
\mathbb{Z}
Y_{i}\right)  ,\text{ }\nonumber\\
\text{and }\Gamma_{3} &  =\exp\left(  \sum_{k=1}^{d}%
\mathbb{Z}
X_{k}\right)  .
\end{align}
such that $\Gamma=\Gamma_{1}\Gamma_{2}\Gamma_{3}$ and
\begin{equation}
\Gamma=\exp\left(  \sum_{l=1}^{n-2d}%
\mathbb{Z}
Z_{l}\right)  \exp\left(  \sum_{i=1}^{d}%
\mathbb{Z}
Y_{i}\right)  \exp\left(  \sum_{k=1}^{d}%
\mathbb{Z}
X_{k}\right)  .\label{gamma}%
\end{equation}

\end{definition}

\begin{lemma}
The group generated by the set $\Gamma$ as defined in (\ref{gamma}) is a
lattice subgroup of $N.$
\end{lemma}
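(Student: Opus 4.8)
The plan is to verify the two defining requirements of a lattice subgroup in turn: that the set $\Gamma=\Gamma_1\Gamma_2\Gamma_3$ is a \emph{discrete, cocompact} subgroup, and that its logarithm sits inside the rational structure of $\mathfrak{n}$. The natural tool is Malcev's rationality criterion (Corwin--Greenleaf, Chapter 5): it suffices to exhibit a strong Malcev basis relative to which the structure constants are rational, and then to rescale it so that they become integral.

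First I would check that the ordered basis $J=\{Z_1,\dots,Z_{n-2d},Y_1,\dots,Y_d,X_1,\dots,X_d\}$ is a strong Malcev basis, i.e.\ that each initial span $\mathfrak{h}_j=\mathbb{R}\text{-span}\{B_1,\dots,B_j\}$ is an ideal of $\mathfrak{n}$. This is immediate from C1 and C2: the tails built only from central vectors lie in $\mathfrak{z}(\mathfrak{n})$; the tails $\mathfrak{z}(\mathfrak{n})\oplus\mathbb{R}\text{-span}\{Y_1,\dots,Y_i\}$ are ideals because $[\mathfrak{n},Y_i]=[\mathfrak{b},Y_i]\subseteq\mathfrak{z}(\mathfrak{n})$; and the remaining tails $\mathfrak{z}(\mathfrak{n})\oplus\mathfrak{a}\oplus\mathbb{R}\text{-span}\{X_1,\dots,X_k\}$ are ideals because $[\mathfrak{n},X_k]=[\mathfrak{a},X_k]\subseteq\mathfrak{z}(\mathfrak{n})$, using $[\mathfrak{b},\mathfrak{b}]=0$. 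By C3 the only nonvanishing brackets are $[X_k,Y_i]\in\mathfrak{z}(\mathfrak{n})$, and the rational structure guarantees $[X_k,Y_i]=\sum_l c_{ki}^l Z_l$ with $c_{ki}^l\in\mathbb{Q}$.

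Next, following the rescaling device already used in Lemma \ref{density}, I would replace the central vectors (equivalently, the $X_k$) by suitable rational multiples so that every $c_{ki}^l\in\mathbb{Z}$, which disturbs neither C1--C3 nor the flag of ideals. With integral structure constants, closure of $\Gamma$ is a direct step-two computation. Writing a typical element as $\gamma=\exp(Z)\exp(Y)\exp(X)$ with $Z\in\sum_l\mathbb{Z}Z_l$, $Y\in\sum_i\mathbb{Z}Y_i$, $X\in\sum_k\mathbb{Z}X_k$, and using that $Z$ is central together with the step-two identity $\exp(X)\exp(Y')=\exp(Y')\exp(X)\exp[X,Y']$ (valid since $[X,Y']\in\mathfrak{z}(\mathfrak{n})$), I would reduce the product to the normal form
\[
\gamma\gamma'=\exp\!\big(Z+Z'+[X,Y']\big)\exp(Y+Y')\exp(X+X').
\]
Because $[X,Y']=\sum_{k,i}m_k n_i'[X_k,Y_i]\in\sum_l\mathbb{Z}Z_l$ by integrality, the product again lies in $\Gamma_1\Gamma_2\Gamma_3$; the analogous reduction yields $\gamma^{-1}=\exp(-Z+[X,Y])\exp(-Y)\exp(-X)\in\Gamma$. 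Hence $\Gamma$ is a genuine subgroup.

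Finally I would establish uniformity. The map $\mathbb{Z}^{n-2d}\times\mathbb{Z}^d\times\mathbb{Z}^d\to N$ sending integer coordinates to $\exp(Z)\exp(Y)\exp(X)$ is a bijection onto $\Gamma$ realizing it in exponential coordinates of the second kind; the image is discrete and $N/\Gamma$ is compact, a quotient of the unit polytope in those coordinates, so $\Gamma$ is uniform. Its logarithm lies in the rational span $\mathfrak{n}_{\mathbb{Q}}$, as required by the definition. I expect the genuine obstacle to be the integrality step: the bracket corrections $[X_k,Y_i]$ must be forced into $\sum_l\mathbb{Z}Z_l$, and it is precisely here that the rational structure C3 and the freedom to rescale the basis are indispensable. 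The one remaining delicacy is interpreting $\log\Gamma$ as an additive subgroup: the central Baker--Campbell--Hausdorff term $\tfrac12[Y,X]$ in $\log\gamma$ contributes half-integer coordinates, so the additive condition should be read as $\log\Gamma\subset\mathfrak{n}_{\mathbb{Q}}$ generating the rational lattice, and this point deserves explicit care.
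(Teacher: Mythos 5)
The paper gives no proof at all here---it declares the argument elementary and defers to Chapter 5 of Corwin--Greenleaf---so there is no ``paper route'' to compare against; your reconstruction follows exactly the canonical Malcev-basis line that citation points to. Most of it is correct: the flag of ideals built from $J$ is a strong Malcev basis by C1--C2 as you say; the step-two normal-form computations $\gamma\gamma'=\exp\left(Z+Z'+[X,Y']\right)\exp(Y+Y')\exp(X+X')$ and $\gamma^{-1}=\exp(-Z+[X,Y])\exp(-Y)\exp(-X)$ are right; and once the structure constants $c_{ki}^{l}$ are integral, $\Gamma$ is itself a discrete uniform subgroup (so ``the group generated by $\Gamma$'' is $\Gamma$), with cocompactness read off from second-kind exponential coordinates in the standard way.

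The genuine gap is your final paragraph. The paper's definition of a lattice subgroup requires $\log\Gamma$ to be an \emph{additive subgroup} of $\mathfrak{n}$, and you are not free to ``read'' this as $\log\Gamma\subset\mathfrak{n}_{\mathbb{Q}}$: the two conditions are inequivalent, and with merely integral structure constants the stated one genuinely fails. Take the Heisenberg case $[X,Y]=Z$: then $\log\left(\exp(mZ)\exp(nY)\exp(kX)\right)=\left(m-\tfrac{nk}{2}\right)Z+nY+kX$, so $X$ and $Y$ lie in $\log\Gamma$ while $X+Y$ does not (it would force $m=\tfrac{1}{2}$); the integer Heisenberg group is uniform but is \emph{not} a lattice subgroup in the paper's sense. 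The repair lives inside your own rescaling step: choose the rational rescaling so that every $c_{ki}^{l}$ lies in $2\mathbb{Z}$, not just $\mathbb{Z}$ (for instance replace each $Z_{l}$ by $\tfrac{1}{2q}Z_{l}$ with $q$ a common denominator). Then the Baker--Campbell--Hausdorff correction $\tfrac{1}{2}[Y,X]$ lands in $\sum_{l}\mathbb{Z}Z_{l}$ and is absorbed into the central factor, giving $\log\Gamma=\sum_{l}\mathbb{Z}Z_{l}+\sum_{i}\mathbb{Z}Y_{i}+\sum_{k}\mathbb{Z}X_{k}$, an honest additive subgroup with $\exp(\log\Gamma)=\Gamma$. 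Two caveats you should make explicit: each rescaling replaces $\Gamma$ by the lattice attached to a \emph{new} Jordan--H\"older basis, so strictly you prove the lemma for the renormalized basis (consistent with the paper's own practice in Lemma \ref{density}, but it should be said); and if one insists on a single basis satisfying both this even-integrality and the normalization $\left\vert \det B(\lambda)\right\vert \leq1$ of Lemma \ref{density}, the two rescalings interact (rescaling the $Z_{l}$ re-coordinatizes $\Sigma$ and dilates $E$) and must be reconciled---though the additivity of $\log\Gamma$ is in fact never invoked in the paper's subsequent frame constructions, so the lemma can be normalized independently.
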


The proof is elementary. Thus we will omit it. The interested reader is
referred to Chapter $5$ of the book \cite{Corwin}

\section{Existence and Construction of Wavelets}

In this section, taking advantage of the representation theory of nilpotent
Lie groups, we will provide an explicit construction of wavelets over non
commutative nilpotent domains. Let $\mu$ denote the Plancherel measure for the
group $N$ and $\mathcal{P}$ the Plancherel transform defined on $L^{2}\left(
N\right)  .$ From now on, we set
\begin{equation}
E=\left(  \left[  -1/2,1/2\right]  ^{n-2d}\backslash\left[  -1/4,1/4\right]
^{n-2d}\right)  \cap\Sigma. \label{wset}%
\end{equation}
Also, recall that via the Fourier transform, given $x\in N,$ we have
$\mathcal{F}\left(  L\left(  x\right)  f\right)  \left(  \lambda\right)
=\pi_{\lambda}\left(  x\right)  \mathcal{F}f\left(  \lambda\right)  $ and
\[
\mathcal{P}\left(  L^{2}\left(  N\right)  \right)  =\int_{\Sigma}^{\oplus
}L^{2}\left(
\mathbb{R}
^{d}\right)  \otimes L^{2}\left(
\mathbb{R}
^{d}\right)  \left\vert \det B\left(  \lambda\right)  \right\vert d\lambda.
\]
Let $\left\{  P_{\lambda}:\lambda\in\Sigma\right\}  $ be a field of
projections defined on $L^{2}\left(
\mathbb{R}
^{d}\right)  .$ We say that a left-invariant closed Hilbert subspace of
$L^{2}\left(  N\right)  $ is a \textbf{multiplicity-free} subspace if and only
if under the Plancherel transform, the Hilbert space corresponds to
\[
\int_{\Sigma}^{\oplus}\left(  L^{2}\left(
\mathbb{R}
^{d}\right)  \otimes P_{\lambda}\left(  L^{2}\left(
\mathbb{R}
^{d}\right)  \right)  \right)  \left\vert \det B\left(  \lambda\right)
\right\vert d\lambda,
\]
and for almost every $\lambda\in\Sigma,$ $\mathrm{rank}\left(  P_{\lambda
}\right)  =1.$ Similarly, we say that a left-invariant closed subspace of
$L^{2}\left(  N\right)  $ is of \textbf{finite multiplicity} if and only if
the image of the Hilbert space under the Plancherel transform is equal to
\[
\int_{\Sigma}^{\oplus}\left(  L^{2}\left(
\mathbb{R}
^{d}\right)  \otimes P_{\lambda}\left(  L^{2}\left(
\mathbb{R}
^{d}\right)  \right)  \right)  \left\vert \det B\left(  \lambda\right)
\right\vert d\lambda,
\]
such that for almost every $\lambda\in\Sigma,$ $\mathrm{rank}\left(
P_{\lambda}\right)  $ is finite. In this section, we will deal with the
existence and construction of Parseval wavelet frames for
\textbf{multiplicity-free}, and \textbf{finite multiplicity }closed
left-invariant subspaces of $L^{2}\left(  N\right)  .$

We fix $\mathbf{u\in}$ $L^{2}\left(
\mathbb{R}
^{d}\right)  $ such that $\left\Vert \mathbf{u}\right\Vert _{L^{2}\left(
\mathbb{R}
^{d}\right)  }=1.$ Let
\[
\left\{  \mathbf{u}\left(  \lambda\right)  =\mathbf{u}:\lambda\in E\right\}
\]
be a measurable field of unit vectors in $L^{2}\left(
\mathbb{R}
^{d}\right)  .$

\begin{proposition}
\label{V0} Let $N$ be a simply connected, connected nilpotent Lie group
satisfying C1,C2, and C3. We fix a Jordan-H\"{o}lder basis for $\mathfrak{n}$
such that for a.e. linear functional $\lambda\in$ $E$, we have $\left\vert
\det B\left(  \lambda\right)  \right\vert \leq1.$ There exists a bandlimited
function $f\in\mathbf{H}_{E},$ defined as
\begin{align*}
\mathbf{H}_{E}  &  =\mathcal{P}^{-1}\left(  \int_{\Sigma}^{\oplus}L^{2}\left(
%
\mathbb{R}
^{d}\right)  \otimes\left(  \mathbf{u}\chi_{E}\left(  \lambda\right)  \right)
\left\vert \det B\left(  \lambda\right)  \right\vert d\lambda\right) \\
&  =\mathcal{P}^{-1}\left(  \int_{E}^{\oplus}\left(  L^{2}\left(
\mathbb{R}
^{d}\right)  \otimes\mathbf{u}\right)  \left\vert \det B\left(  \lambda
\right)  \right\vert d\lambda\right)
\end{align*}
such that the system $\left\{  L\left(  \gamma\right)  f:\gamma\in
\Gamma\right\}  $ forms a \textbf{Parseval frame} in $\mathbf{H}_{E}.$
Moreover, $\left\Vert f\right\Vert _{\mathbf{H}_{E}}=\mu\left(  E\right)
^{1/2}\leq2^{\frac{2d-n}{2}}. $
\end{proposition}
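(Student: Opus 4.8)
The plan is to transport the problem to the Plancherel/Fourier side, where the left regular representation becomes multiplication by $\pi_\lambda(\gamma)$ and the subspace $\mathbf{H}_E$ becomes an explicit direct integral. Concretely, under $\mathcal{P}$ a function $f\in\mathbf{H}_E$ corresponds to a field $\lambda\mapsto \mathcal{P}f(\lambda)\in L^2(\mathbb{R}^d)\otimes\mathbf{u}$ supported on $E$, which we may write as $\phi_\lambda\otimes\mathbf{u}$ for some $\phi_\lambda\in L^2(\mathbb{R}^d)$. The frame condition $\sum_{\gamma\in\Gamma}|\langle g,L(\gamma)f\rangle|^2=\|g\|^2$ for all $g\in\mathbf{H}_E$ will, after using $\mathcal{F}(L(\gamma)f)(\lambda)=\pi_\lambda(\gamma)\mathcal{P}f(\lambda)$ and the Plancherel isometry, reduce to a fiberwise frame condition involving the operators $\pi_\lambda(\gamma)$ acting on the field $\phi_\lambda$.

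The key step is to recognize that the relevant part of $\Gamma$ acts through the Gabor structure already identified in the Remark following the representation formula. Writing $\gamma=\gamma_1\gamma_2\gamma_3$ with $\gamma_1\in\Gamma_1$, $\gamma_2\in\Gamma_2$, $\gamma_3\in\Gamma_3$, the central part $\pi_\lambda(\gamma_1)$ acts by the scalar $e^{2\pi i\langle\lambda,z\rangle}$ with $z\in\mathbb{Z}^{n-2d}$, which (via Fourier series on the central variables, exploiting that $E\subset[-1/2,1/2]^{n-2d}$ has sidelength one) furnishes an orthonormal exponential basis and handles the integration over $E$. The remaining part $\pi_\lambda(\gamma_2)\pi_\lambda(\gamma_3)$ acts on $L^2(\mathbb{R}^d)$ precisely as the Gabor system $\mathcal{G}(\phi_\lambda,\mathbb{Z}^d\times B(\lambda)\mathbb{Z}^d)$. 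First I would therefore assemble these pieces so that the global Parseval frame requirement becomes: for a.e.\ $\lambda\in E$, the Gabor system $\mathcal{G}(\phi_\lambda,\mathbb{Z}^d\times B(\lambda)\mathbb{Z}^d)$ is a Parseval frame in $L^2(\mathbb{R}^d)$.

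Next I would invoke the Gabor theory from Section 2. Since $|\det B(\lambda)|\le1$ on $E$ by hypothesis (Lemma \ref{density}), the lattice $\mathbb{Z}^d\times B(\lambda)\mathbb{Z}^d$ has density $d(\Lambda)=|\det B(\lambda)|^{-1}\ge1$, so Proposition \ref{d} guarantees the existence of $\phi_\lambda$ with $\mathcal{G}(\phi_\lambda,\mathbb{Z}^d\times B(\lambda)\mathbb{Z}^d)$ a Parseval frame. One must choose this field measurably in $\lambda$; since $B(\lambda)$ depends polynomially on $\lambda$ and the Zak-transform construction underlying Proposition \ref{d} is explicit, a measurable selection of $\phi_\lambda$ is available, and then $f=\mathcal{P}^{-1}(\phi_\lambda\otimes\mathbf{u})$ is the desired function. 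The norm computation then follows from Proposition \ref{norm}: for a Parseval Gabor frame one has $\|\phi_\lambda\|^2=|\det\mathbf{I}_d\cdot\det B(\lambda)|=|\det B(\lambda)|$, so that
\[
\|f\|_{\mathbf{H}_E}^2=\int_E\|\phi_\lambda\|^2\,\frac{d\mu(\lambda)}{|\det B(\lambda)|}=\int_E\frac{|\det B(\lambda)|}{|\det B(\lambda)|}\,d\lambda=\rho(E)=\mu(E)\big/\!\cdots
\]
which I would reconcile with the claimed value $\mu(E)^{1/2}$; the bound $\mu(E)\le 2^{2d-n}$ follows because $E\subset[-1/2,1/2]^{n-2d}\setminus[-1/4,1/4]^{n-2d}$ has Lebesgue measure at most $1-2^{-(n-2d)}$ and $|\det B(\lambda)|\le1$.

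The main obstacle I anticipate is the careful bookkeeping of the two tensor factors and the weight $|\det B(\lambda)|$ in the Plancherel measure. The subspace $\mathbf{H}_E$ pins down the second tensor factor to the fixed vector $\mathbf{u}$, so the left regular representation acts only on the first factor through $\pi_\lambda\otimes 1$; I must verify that restricting to $\mathbf{u}$ does not spoil the fiberwise frame reduction and that the measure weight $|\det B(\lambda)|$ cancels correctly against the Parseval-Gabor norm $\|\phi_\lambda\|^2=|\det B(\lambda)|$ when computing both the frame bounds and $\|f\|_{\mathbf{H}_E}$. Getting the exponent in the final norm bound to match $2^{(2d-n)/2}$ exactly (rather than off by a square root or a factor from the measure normalization) is the delicate point, and I would treat the norm identity $\|f\|_{\mathbf{H}_E}=\mu(E)^{1/2}$ as the claim to pin down first, deriving the inequality from it.
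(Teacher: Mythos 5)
Your strategy is the same as the paper's: pass to the Plancherel side, split $\Gamma$ into its central part (which gives Fourier series on $E\subset\left[-1/2,1/2\right]^{n-2d}$) and the part $\Gamma_{2}\Gamma_{3}$ (which gives the fiberwise Gabor systems $\mathcal{G}\left(\phi_{\lambda},\mathbb{Z}^{d}\times B\left(\lambda\right)\mathbb{Z}^{d}\right)$), and invoke Proposition \ref{d} for existence and Proposition \ref{norm} for the norm. But there is a genuine gap at precisely the point you flagged as ``delicate'' and postponed: your generator is defined as $f=\mathcal{P}^{-1}\left(\phi_{\lambda}\otimes\mathbf{u}\,\chi_{E}\right)$, \emph{without} the normalizing factor $\left\vert\det B\left(\lambda\right)\right\vert^{-1/2}$, whereas the paper sets $\mathcal{F}f\left(\lambda\right)=\left(\left\vert\det B\left(\lambda\right)\right\vert^{-1/2}\phi\left(\lambda\right)\otimes\mathbf{u}\right)\chi_{E}\left(\lambda\right)$. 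This factor is not cosmetic; it is what makes the frame identity close. Run your reduction with the unweighted $f$: each coefficient $\left\langle g,L\left(\gamma\right)f\right\rangle_{\mathbf{H}_{E}}$ carries the Plancherel weight $\left\vert\det B\left(\lambda\right)\right\vert$ inside the integral over $E$, so after the central Fourier-series step this weight appears \emph{squared}, and the fiberwise Gabor Parseval identity yields
\[
\sum_{\gamma\in\Gamma}\left\vert\left\langle g,L\left(\gamma\right)f\right\rangle\right\vert^{2}=\int_{E}\left\Vert u_{g}\left(\lambda\right)\right\Vert^{2}\left\vert\det B\left(\lambda\right)\right\vert^{2}d\lambda,
\]
while $\left\Vert g\right\Vert_{\mathbf{H}_{E}}^{2}=\int_{E}\left\Vert u_{g}\left(\lambda\right)\right\Vert^{2}\left\vert\det B\left(\lambda\right)\right\vert d\lambda$. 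These agree only if $\left\vert\det B\left(\lambda\right)\right\vert=1$ a.e., so your system is not a Parseval frame. With the factor $\left\vert\det B\left(\lambda\right)\right\vert^{-1/2}$ inserted, the half-power from $f$ and the full power from the measure combine to exactly one power of $\left\vert\det B\left(\lambda\right)\right\vert$, the computation closes, and the norm comes out right: $\left\Vert f\right\Vert_{\mathbf{H}_{E}}^{2}=\int_{E}\left\Vert\phi\left(\lambda\right)\right\Vert^{2}d\lambda=\int_{E}\left\vert\det B\left(\lambda\right)\right\vert d\lambda=\mu\left(E\right)$ by Proposition \ref{norm}.

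Two further points. First, your norm chain is internally inconsistent with your own definition of $f$: it lands on the Lebesgue measure $\rho\left(E\right)$ rather than $\mu\left(E\right)$ (with your unweighted $f$ one actually gets $\int_{E}\left\vert\det B\left(\lambda\right)\right\vert^{2}d\lambda$), and you leave it unreconciled; this is the same missing-weight problem surfacing again, not a separate bookkeeping detail. Second, your final inequality is a non sequitur: from ``$E$ has Lebesgue measure at most $1-2^{-\left(n-2d\right)}$ and $\left\vert\det B\left(\lambda\right)\right\vert\leq1$'' you can only conclude $\mu\left(E\right)\leq1-2^{-\left(n-2d\right)}$, which is weaker than the claimed $2^{2d-n}$ whenever $n-2d\geq2$. (To be fair, the paper's own last line, $\int_{E}d\lambda\leq2^{-\left(n-2d\right)}$, suffers from the same defect, since the annulus $\left[-1/2,1/2\right]^{n-2d}\backslash\left[-1/4,1/4\right]^{n-2d}$ has Lebesgue measure $1-2^{-\left(n-2d\right)}$; the inequality as stated holds only when $n-2d=1$.) On the positive side, your insistence on a measurable selection $\lambda\mapsto\phi_{\lambda}$ is a legitimate refinement: the paper chooses $\phi\left(\lambda\right)$ pointwise from Proposition \ref{d} without addressing measurability of the field, so making that explicit would genuinely strengthen the argument.
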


\begin{proof}
For every fixed linear functional $\lambda\in E$ (see \ref{wset}) since
$\left\vert \det B\left( \lambda\right)\right\vert$ is less or equal to one, there exists a
function $\phi\left(  \lambda\right)  \in L^{2}\left(
\mathbb{R}
^{d}\right)  $ such that the Gabor system $\mathcal{G}\left(  \phi\left(
\lambda\right)  ,%
\mathbb{Z}
^{2}\times\det B\left(  \lambda\right)
\mathbb{Z}
^{2}\right)  $ forms a Parseval frame a.e. This is due to the density
condition given in Proposition \ref{d}. Now, let us define a function
$f\in\mathbf{H}_{E}$ such that
\[
\mathcal{F}f\left(  \lambda\right)  =\left(  \left\vert \det B\left(
\lambda\right)  \right\vert ^{-1/2}\phi\left(  \lambda\right)  \otimes
\mathbf{u}\right)  \chi_{E}\left(  \lambda\right)  .
\]
Referring to (\ref{representation}), we recall that
$
\mathcal{F}\left(  L\left(  \gamma\right)  f\right)  \left(  \lambda\right)
=\pi_{\lambda}\left(  \gamma\right)  \circ\mathcal{F}f\left(  \lambda\right)$ $
=\left\vert \det B\left(  \lambda\right)  \right\vert ^{-1/2}\left(  \left(
\pi_{\lambda}\left(  \gamma\right)  \phi\left(  \lambda\right)  \right)
\otimes\mathbf{u}\right)  \chi_{E}\left(  \lambda\right)  .
$
Let $g$ be any function in $\mathbf{H}_{E}$ such that $\mathcal{F}g\left(
\lambda\right)  =u_{g}\left(  \lambda\right)  \otimes\left(  \mathbf{u}%
\chi_{E}\left(  \lambda\right)  \right)  ,$ and $r(\lambda)=\left\vert \det
B\left(  \lambda\right)  \right\vert ^{1/2}.$ We have
$$
\sum_{\gamma\in\Gamma}\left\vert \left\langle g,L\left(  \gamma\right)
f\right\rangle _{\mathbf{H}_{E}}\right\vert ^{2} =\sum_{\gamma\in\Gamma
}\left\vert \int_{E}\left\langle u_{g}\left(  \lambda\right)  ,r(\lambda
)\pi_{\lambda}\left(  \gamma\right)  \phi\left(  \lambda\right)  \right\rangle
_{L^{2}\left(  \mathbb{R}^{d}\right)  }d\lambda\right\vert ^{2}$$ which is equal to $$  \sum_{\eta\in\mathbb{Z}^{2d}}\sum_{k\in\mathbb{Z}^{n-2d}}\left\vert
\int_{E}e^{2\pi i\left\langle k,\lambda\right\rangle }\left\langle
u_{g}\left(  \lambda\right)  ,r(\lambda)\pi_{\lambda}\left(  \eta\right)
\phi\left(  \lambda\right)  \right\rangle _{L^{2}\left(  \mathbb{R}%
^{d}\right)  }d\lambda\right\vert ^{2}.$$
Since $\left\{  e^{2\pi i\left\langle k,\lambda\right\rangle }\chi_{E}\left(
\lambda\right)  \right\}  _{k\in%
\mathbb{Z}
}$ defines a Parseval frame in $L^{2}\left(  E\right)  ,$ letting
\[
c_{\eta}\left(  \lambda\right)  =\left\langle u_{g}\left(  \lambda\right)
,\left\vert \det B\left(  \lambda\right)  \right\vert ^{1/2}\pi_{\lambda
}\left(  \eta\right)  \phi\left(  \lambda\right)  \right\rangle _{L^{2}\left(
%
\mathbb{R}
^{d}\right)  },
\]
we obtain
$$
\sum_{\eta\in\Gamma_{2}\Gamma_{3}}\sum_{k\in%
\mathbb{Z}
^{n-2d}}\left\vert \int_{E}e^{2\pi i\left\langle k,\lambda\right\rangle
}c_{\eta}\left(  \lambda\right)  d\lambda\right\vert ^{2}=\sum_{\eta\in
\Gamma_{2}\Gamma_{3}}\sum_{k\in%
\mathbb{Z}
^{n-2d}}\left\vert \widehat{c}_{\eta}\left(  k\right)  \right\vert ^{2}$$
The above equality is simply $\sum_{\eta\in\Gamma_{2}\Gamma_{3}}\left\Vert c_{\eta}\right\Vert
_{L^{2}\left(  E\right)  }^{2}.
$
Using the fact that $\mathcal{G}\left(  \phi\left(  \lambda\right)  ,%
\mathbb{Z}
^{2}\times\det B\left(  \lambda\right)
\mathbb{Z}
^{2}\right)  $ is a Parseval frame for almost every $\lambda\in\Sigma$, we
obtain $\sum_{\gamma\in\Gamma}\left\vert \left\langle g,L\left(  \gamma\right)
f\right\rangle _{\mathbf{H}_{E}}\right\vert ^{2}$ is equal to
\begin{align*}
& \int_{E}\sum_{\eta
\in\Gamma_{2}\Gamma_{3}}\left\vert \left\langle u_{g}\left(  \lambda\right)
,\left\vert \det B\left(  \lambda\right)  \right\vert ^{1/2}\pi_{\lambda
}\left(  \eta\right)  \phi\left(  \lambda\right)  \right\rangle _{L^{2}\left(
%
\mathbb{R}
^{d}\right)  }\right\vert ^{2}d\lambda\\
&  =\int_{E}\sum_{\eta\in\Gamma_{2}\Gamma_{3}}\left\vert \left\langle
u_{g}\left(  \lambda\right)  ,\pi_{\lambda}\left(  \eta\right)  \phi\left(
\lambda\right)  \right\rangle _{L^{2}\left(
\mathbb{R}
^{d}\right)  }\right\vert ^{2}\left\vert \det B\left(  \lambda\right)
\right\vert d\lambda\\
&  =\int_{E}\left\Vert \mathcal{F}g\left(  \lambda\right)  \right\Vert
_{\mathcal{HS}}^{2}\left\vert \det B\left(  \lambda\right)  \right\vert
d\lambda\\
&  =\left\Vert g\right\Vert _{\mathbf{H}_{E}}^{2}.
\end{align*}
Now, computing the norm of $f,$ we apply the results from Proposition
\ref{norm}, and we obtain
\[
\left\Vert f\right\Vert _{\mathbf{H}_{E}}^{2}=\int_{E}\left\Vert \phi\left(
\lambda\right)  \right\Vert _{\mathcal{HS}}^{2}d\lambda=\int_{E}\left\vert
\det B\left(  \lambda\right)  \right\vert d\lambda=\mu\left(  E\right)  .
\]
Finally, for the last part, since $E\subset\left\{  \lambda\in\Sigma
:\left\vert \det B\left(  \lambda\right)  \right\vert \leq1\right\}  ,$ then
\[
\left\Vert f\right\Vert _{\mathbf{H}_{E}}^{2}=\int_{E}\left\vert \det B\left(
\lambda\right)  \right\vert d\lambda\leq\int_{E}d\lambda\leq\frac{1}{2^{n-2d}%
}.
\]
This concludes the proof.
\end{proof}

We would like to remark that the Hilbert space $\mathbf{H}_{E}$ is natually
identified with the Hilbert space
\[
L^{2}\left(  E\times%
\mathbb{R}
^{d},\left\vert \det B\left(  \lambda\right)  \right\vert d\lambda dt\right)
.
\]
However, it is much more convenient to use the notation of direct integral. We
recall that given $\gamma\in\Gamma$,
\[
A\left(  \gamma\right)  =\exp\left(  \sum_{k=1}^{n-2d}2m_{k}Z_{k}\right)
\exp\left(  \sum_{k=1}^{d}2n_{k}Y_{k}\right)  \exp\left(  \sum_{k=1}^{d}%
j_{k}X_{k}\right)  .
\]

\begin{definition}
(\textbf{Dilation action}) Let $\mathcal{U}\left(  L^{2}\left(  N\right)
\right)  $ be the group of unitary operators acting in $L^{2}\left(  N\right)
. $ We define a unitary representation of the group\ $H=\left\{  A^{j}:j\in%
\mathbb{Z}
\right\}  $ acting in $L^{2}\left(  N\right)  $ as follows. $D:H\rightarrow
\mathcal{U}\left(  L^{2}\left(  N\right)  \right)  $ and
\[
\left(  D_{A^{j}}f\right)  \left(  x\right)  =\det\left(  Ad_{A}\right)
^{-j/2}f\left(  A^{-j}\left(  x\right)  \right)  =2^{-j\left(  \frac{n-d}%
{2}\right)  }f\left(  A^{-j}\left(  x\right)  \right)  .
\]

\end{definition}

\begin{lemma}
Given any function $f\in L^{2}\left(  N\right)  ,$
\[
\mathcal{F}\left(  D_{A^{j}}f\right)  \left(  \lambda\right)  =2^{j\left(
\frac{n-d}{2}\right)  }\mathcal{F}f\left(  2^{j}\mathbf{I}_{n-2d}%
\lambda\right)  .
\]

\end{lemma}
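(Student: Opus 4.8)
The plan is to compute $\mathcal{F}(D_{A^j}f)(\lambda)$ directly from the weak definition of the Fourier transform, $\mathcal{F}h(\lambda)=\int_N h(n)\pi_\lambda(n)\,dn$. Inserting the definition of the dilation operator gives
\[
\mathcal{F}(D_{A^j}f)(\lambda)=2^{-j(\frac{n-d}{2})}\int_N f\bigl(A^{-j}(n)\bigr)\,\pi_\lambda(n)\,dn.
\]
I would first reduce to $f\in L^{1}(N)\cap L^{2}(N)$, where the integral converges absolutely, and then extend the final identity to all of $L^{2}(N)$ by density together with the fact that $\mathcal{P}$ is an isometry and each $D_{A^j}$ is unitary. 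On the dense subspace the substitution $n=A^{j}(m)$ is legitimate; since $A$ acts on $N$ as the linear map sending $(z,y,x)\mapsto(2z,2y,x)$ in the chosen coordinates, its $j$-th power has Jacobian equal to $\det(Ad_{A^{j}})=\det(Ad_{A})^{j}=2^{j(n-d)}$, which I read off from Corollary \ref{linear} (the block form of $Ad_{\exp U}$ gives $\det Ad_{A}=2^{\,n-2d}\cdot 2^{\,d}\cdot 1=2^{\,n-d}$). After the change of variables the measure factor $2^{j(n-d)}$ combines with the prefactor $2^{-j(\frac{n-d}{2})}$ to produce the asserted constant $2^{j(\frac{n-d}{2})}$, leaving
\[
\mathcal{F}(D_{A^j}f)(\lambda)=2^{j(\frac{n-d}{2})}\int_N f(m)\,\pi_\lambda\bigl(A^{j}(m)\bigr)\,dm.
\]

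The heart of the argument, and the step I expect to require the most care, is the covariance identity $\pi_\lambda\circ A^{j}=\pi_{2^{j}\mathbf{I}_{n-2d}\lambda}$ as operators on $L^{2}(\mathbb{R}^{d})$. I would establish this by a direct factorwise computation using the explicit formulas (\ref{representation}). Writing $m=\exp(z\cdot Z)\exp(y\cdot Y)\exp(x\cdot X)$, the automorphism gives $A^{j}(m)=\exp(2^{j}z\cdot Z)\exp(2^{j}y\cdot Y)\exp(x\cdot X)$, so that $\pi_\lambda(A^{j}(m))$ is the product of the three operators $\phi(t)\mapsto e^{2\pi i\langle 2^{j}\lambda,\,z\rangle}\phi(t)$, $\phi(t)\mapsto e^{2\pi i\langle t,\,2^{j}B(\lambda)y\rangle}\phi(t)$, and $\phi(t)\mapsto\phi(t-x)$. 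The key observation is that $B(2^{j}\lambda)=2^{j}B(\lambda)$: because every bracket $[X_i,Y_k]$ lies in the center $\mathfrak{z}(\mathfrak{n})=\mathbb{R}\text{-span}\{Z_1,\dots,Z_{n-2d}\}$, scaling the central coordinates of $\lambda$ by $2^{j}$ scales each entry $\lambda[X_i,Y_k]$ of the matrix $B(\lambda)$ in (\ref{B}) by exactly $2^{j}$. Matching the three factors against (\ref{representation}) evaluated at $2^{j}\mathbf{I}_{n-2d}\lambda$ shows the two operators coincide on the nose, not merely up to unitary equivalence.

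Substituting this identity back yields
\[
\mathcal{F}(D_{A^j}f)(\lambda)=2^{j(\frac{n-d}{2})}\int_N f(m)\,\pi_{2^{j}\mathbf{I}_{n-2d}\lambda}(m)\,dm=2^{j(\frac{n-d}{2})}\,\mathcal{F}f\bigl(2^{j}\mathbf{I}_{n-2d}\lambda\bigr),
\]
which is the desired formula. I would close by remarking that the scaling direction $2^{j}$ (rather than $2^{-j}$) is exactly consistent with the coadjoint computation in the preceding remark, where $A\cdot\lambda=\tfrac12\mathbf{I}_{n-2d}\lambda$: composing $\pi_\lambda$ with the automorphism dualizes that action, so the inverse scaling appears. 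The only genuinely nontrivial points are the measure-theoretic extension from $L^{1}\cap L^{2}$ to $L^{2}$ and the homogeneity $B(2^{j}\lambda)=2^{j}B(\lambda)$; both are controlled once the centrality of the brackets $[X_i,Y_k]$ is invoked.
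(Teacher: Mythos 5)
Your proof is correct and follows essentially the same route as the paper: the weak definition of $\mathcal{F}$, the change of variables $n=A^{j}(m)$ with Jacobian $\det(Ad_{A^{j}})=2^{j(n-d)}$, and the covariance identity $\pi_{\lambda}\circ A^{j}=\pi_{2^{j}\mathbf{I}_{n-2d}\lambda}$. The only differences are that you verify that covariance factorwise from the explicit realization (using $B(2^{j}\lambda)=2^{j}B(\lambda)$), where the paper simply cites the coadjoint-action remark $A^{-j}\cdot\lambda=2^{j}\mathbf{I}_{n-2d}\lambda$, and that you make explicit the density extension from $L^{1}(N)\cap L^{2}(N)$ to $L^{2}(N)$, which the paper leaves implicit.
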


\begin{proof}
Given any $u,v\in L^{2}\left(
\mathbb{R}
^{d}\right)  ,$%
\begin{align*}
\left\langle \mathcal{F}\left(  D_{A^{j}}f\right)  \left(  \lambda\right)
u,v\right\rangle  &  =\int_{N}D_{A^{j}}f\left(  x\right)  \left\langle
\mathcal{\pi}_{\lambda}\left(  x\right)  u,v\right\rangle dx\\
&  =2^{j\left(  \frac{n-d}{2}\right)  }\int_{N}f\left(  x\right)  \left\langle
\mathcal{\pi}_{\lambda}\left(  A^{j}\left(  x\right)  \right)
u,v\right\rangle dx.
\end{align*}
For each $\lambda\in\Sigma$, we recall that we identify $\lambda=\left(
\lambda_{1},\cdots,\lambda_{n-2d},0,\cdots,0\right)  $ with $\left(
\lambda_{1},\cdots,\lambda_{n-2d}\right)  .$ Also for every $\gamma\in\Gamma,$
we have
\[
\pi_{\lambda}\left(  A^{j}\left(  \gamma\right)  \right)  =\pi_{A^{-j}%
\cdot\lambda}\left(  \gamma\right)  =\pi_{2^{j}\mathbf{I}_{n-2d}\lambda
}\left(  \gamma\right)
\]
where $A^{-j}\cdot\lambda$ denotes the coadjoint action of $A^{-j}$ on a
linear functional $\lambda.$ Since
\[
\left\langle \mathcal{F}\left(  D_{A^{j}}f\right)  \left(  \lambda\right)
u,v\right\rangle =2^{j\left(  \frac{n-d}{2}\right)  }\int_{N}f\left(
x\right)  \left\langle \pi_{2^{j}\mathbf{I}_{n-2d}\lambda}\left(
\gamma\right)  u,v\right\rangle dx,
\]
is true for all $u,v\in L^{2}\left(
\mathbb{R}
^{d}\right)  ,$ we obtain
\[
\mathcal{F}\left(  D_{A^{j}}f\right)  \left(  \lambda\right)  =2^{j\left(
\frac{n-d}{2}\right)  }\mathcal{F}f\left(  2^{j}\mathbf{I}_{n-2d}%
\lambda\right)  .
\]
\end{proof}

\begin{definition}
Given $j\in%
\mathbb{Z}
,$ we define the Hilbert space
\begin{equation}
\mathbf{H}_{E}^{j}=\mathcal{P}^{-1}\left(  \int_{2^{-j}\mathbf{I}_{n-2d}%
E}^{\oplus}\left(  L^{2}\left(
\mathbb{R}
^{d}\right)  \otimes\mathbf{u}\right)  \text{ }\left\vert \det B\left(
\lambda\right)  \right\vert d\lambda\right)  . \label{HJ}%
\end{equation}

\end{definition}

The choice of dilation induced by the action of $A$ has been carefully chosen
in Corollary \ref{linear} so that the following lemma is indeed possible. We
would like to notice that for general dilations, Lemma \ref{dilated} below is
false. That is the dilation coming from $A$ is a special type of dilation.

\begin{lemma}
\label{dilated} For any $j\in%
\mathbb{Z}
,$
\[
\mathcal{P}\left(  D_{A^{j}}\left(  \mathbf{H}_{E}\right)  \right)
\subset\int_{2^{-j}\mathbf{I}_{n-2d}E}^{\oplus}\left(  L^{2}\left(
\mathbb{R}
^{d}\right)  \otimes\mathbf{u}\right)  \left\vert \det B\left(  \lambda
\right)  \right\vert d\lambda.
\]

\end{lemma}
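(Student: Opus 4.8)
The plan is to argue entirely on the Plancherel side and simply read off the Fourier support and the tensor structure of $\mathcal{F}(D_{A^j}f)$ from the intertwining formula established in the lemma immediately above. Since $D_{A^j}$ is a unitary operator on $L^2(N)$, it carries the closed subspace $\mathbf{H}_E$ to a closed subspace of $L^2(N)$, and to locate the image inside the Plancherel decomposition it suffices to identify, for an arbitrary $f\in\mathbf{H}_E$, the measurable field $\lambda\mapsto\mathcal{F}(D_{A^j}f)(\lambda)$.

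First I would take $f\in\mathbf{H}_E$ and write $\mathcal{F}f(\lambda)=\bigl(v_f(\lambda)\otimes\mathbf{u}\bigr)\chi_E(\lambda)$ with $v_f(\lambda)\in L^2(\mathbb{R}^d)$; this is precisely what membership in $\mathbf{H}_E$ encodes. Applying the preceding lemma then gives
\[
\mathcal{F}(D_{A^j}f)(\lambda)=2^{j\left(\frac{n-d}{2}\right)}\bigl(v_f(2^j\mathbf{I}_{n-2d}\lambda)\otimes\mathbf{u}\bigr)\chi_E(2^j\mathbf{I}_{n-2d}\lambda).
\]
Next I would rewrite the indicator as $\chi_E(2^j\mathbf{I}_{n-2d}\lambda)=\chi_{2^{-j}\mathbf{I}_{n-2d}E}(\lambda)$, which shows that the field vanishes for almost every $\lambda\notin 2^{-j}\mathbf{I}_{n-2d}E$ and hence pins down the support. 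The decisive observation is that the dilation formula merely relabels the spectral variable $\lambda$ and rescales by a scalar; it never acts on the second tensor slot. Therefore the multiplicity component of $\mathcal{F}(D_{A^j}f)(\lambda)$ remains the fixed unit vector $\mathbf{u}$, so that $\mathcal{F}(D_{A^j}f)(\lambda)\in L^2(\mathbb{R}^d)\otimes\mathbf{u}$ for almost every $\lambda$. These two facts together say exactly that $\mathcal{P}(D_{A^j}f)$ lies in $\int_{2^{-j}\mathbf{I}_{n-2d}E}^{\oplus}\left(L^2(\mathbb{R}^d)\otimes\mathbf{u}\right)\left\vert\det B(\lambda)\right\vert d\lambda$, and since $f\in\mathbf{H}_E$ was arbitrary the claimed inclusion follows.

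The one point that warrants care --- and where I expect the routine bookkeeping to concentrate --- is the measure-theoretic check that the relabelled section $\lambda\mapsto v_f(2^j\mathbf{I}_{n-2d}\lambda)$ is measurable and square-integrable against the weighted Plancherel measure $\left\vert\det B(\lambda)\right\vert d\lambda$ on the dilated set. Abstractly this is automatic from the unitarity of $D_{A^j}$, so the image is guaranteed to be an honest element of $L^2(N)$; concretely it is the change of variables $\mu=2^j\mathbf{I}_{n-2d}\lambda$, under which the Jacobian $2^{-j(n-2d)}$, the degree-$d$ homogeneity $\det B(2^j\mathbf{I}_{n-2d}\lambda)=2^{jd}\det B(\lambda)$, and the prefactor $2^{j(n-d)}$ cancel to give the isometry $\left\Vert D_{A^j}f\right\Vert=\left\Vert f\right\Vert$. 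Finally I would note that the inclusion is in fact an equality, since $v_f$ ranges over all admissible sections as $f$ runs through $\mathbf{H}_E$, although only the stated containment is used in what follows.
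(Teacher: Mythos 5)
Your proposal is correct and takes essentially the same route as the paper's proof: write $\mathcal{F}f(\lambda)=\left(  v_{f}\left(  \lambda\right)  \otimes\mathbf{u}\right)  \chi_{E}\left(  \lambda\right)  $ for $f\in\mathbf{H}_{E}$, apply the preceding dilation--Fourier intertwining lemma, and read off that the support is relabelled to $2^{-j}\mathbf{I}_{n-2d}E$ while the second tensor factor remains the fixed vector $\mathbf{u}$. Your additional bookkeeping (the Jacobian/homogeneity cancellation giving unitarity, and the remark that the inclusion is in fact an equality) is correct but goes beyond the paper's two-line argument, which stops exactly at the displayed formula for $\mathcal{F}\left(  D_{A^{j}}f\right)  \left(  \lambda\right)  $.
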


\begin{proof}
Let $f\in\mathbf{H}_{E}.$ For almost every $\lambda\in E,$ there exists
$\phi\left(  \lambda\right)  \in L^{2}\left(
\mathbb{R}
^{d}\right)  $ such that $\mathcal{F}f\left(  \lambda\right)  =\phi\left(
\lambda\right)  \otimes\left(  \mathbf{u}\left(  \chi_{E}\left(
\lambda\right)  \right)  \right)  .$ Next we have
\[
\mathcal{F}\left(  D_{A^{j}}f\right)  \left(  \lambda\right)  =2^{j\left(
\frac{n-d}{2}\right)  }\phi\left(  2^{j}\mathbf{I}_{n-2d}\lambda\right)
\otimes\left(  \mathbf{u}\left(  \chi_{E}\left(  2^{j}\mathbf{I}_{n-2d}%
\lambda\right)  \right)  \right)  .
\]
\end{proof}

\begin{proposition}
For $j,j^{\prime}\in%
\mathbb{Z}
,$ the following hold.

\begin{enumerate}
\item $L\left(  A^{j}\gamma\right)  D_{A^{j}}=D_{A^{j}}L\left(  \gamma\right)
.$

\item Let $f$ be such that $L\left(  \Gamma\right)  f$ is a Parseval frame in
$\mathbf{H}_{E}.$ The system $\left\{  L\left(  A^{j}\gamma\right)  D_{A^{j}%
}f:\gamma\in\Gamma\right\}  $ forms a Parseval frame for $\mathbf{H}_{E}^{j}.$

\item $\left(  D_{A^{j}}L\left(  \Gamma\right)  \right)  \left(
\mathbf{H}_{E}\right)  =\mathbf{H}_{E}^{j}.$

\item For $j\neq j^{\prime},$ $\mathbf{H}_{E}^{j}$ $\bot$ $\mathbf{H}%
_{E}^{j^{\prime}}.$

\item Let $\Sigma$ be the support of the Plancherel measure of the group $N$
as defined in (\ref{sigma}).
\[%
{\displaystyle\bigoplus\limits_{j\in\mathbb{Z}}}
\mathbf{H}_{E}^{j}=\mathcal{P}^{-1}\left(  \int_{\Sigma}^{\oplus}\left(
L^{2}\left(
\mathbb{R}
^{d}\right)  \otimes\mathbf{u}\right)  \left\vert \det B\left(  \lambda
\right)  \right\vert d\lambda\right)  .
\]

\end{enumerate}
\end{proposition}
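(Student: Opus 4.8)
The plan is to transport the whole statement to the Plancherel side, where $L(\gamma)$ acts only on the first tensor factor as $\pi_\lambda(\gamma)$ and $D_{A^j}$ acts as the spectral dilation $\lambda\mapsto 2^j\mathbf{I}_{n-2d}\lambda$ recorded in the lemma preceding Lemma \ref{dilated}; once this is in hand, all five claims reduce to bookkeeping about the supports $2^{-j}\mathbf{I}_{n-2d}E$. Item (1) I would prove directly on $L^2(N)$. Writing $\alpha_j$ for the automorphism $A^j$ and $c_j=\det(Ad_A)^{-j/2}$, one has $(D_{A^j}f)(x)=c_j\,f(\alpha_j^{-1}(x))$ and $A^j\gamma=\alpha_j(\gamma)$, so
\[
\bigl(L(\alpha_j(\gamma))D_{A^j}f\bigr)(x)=c_j\,f\bigl(\alpha_j^{-1}(\alpha_j(\gamma)^{-1}x)\bigr)=c_j\,f\bigl(\gamma^{-1}\alpha_j^{-1}(x)\bigr)=\bigl(D_{A^j}L(\gamma)f\bigr)(x),
\]
the middle equality being exactly the statement that $\alpha_j^{-1}$ is an automorphism. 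This step is routine.

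For (3) I would first note that $\mathbf{H}_E$ is invariant under $L(\gamma)$: on the Plancherel side the left regular representation acts as $\pi_\lambda(\gamma)\otimes 1$, hence multiplies the first factor and leaves both the fixed field $\mathbf{u}$ and the support $E$ untouched. Therefore $(D_{A^j}L(\Gamma))(\mathbf{H}_E)=D_{A^j}(\mathbf{H}_E)$. Lemma \ref{dilated} already gives $D_{A^j}(\mathbf{H}_E)\subseteq\mathbf{H}_E^j$, and for the reverse inclusion I would use $\mathcal{F}(D_{A^j}f)(\lambda)=2^{j(n-d)/2}\mathcal{F}f(2^j\mathbf{I}_{n-2d}\lambda)$ to exhibit a preimage: given $g\in\mathbf{H}_E^j$ with $\mathcal{F}g$ supported on $2^{-j}\mathbf{I}_{n-2d}E$, the function $f$ determined by $\mathcal{F}f(\mu)=2^{-j(n-d)/2}\mathcal{F}g(2^{-j}\mathbf{I}_{n-2d}\mu)$ is supported on $E$ (since $2^{-j}\mathbf{I}_{n-2d}\mu\in 2^{-j}\mathbf{I}_{n-2d}E$ forces $\mu\in E$) and satisfies $D_{A^j}f=g$, so $f\in\mathbf{H}_E$ and $D_{A^j}(\mathbf{H}_E)=\mathbf{H}_E^j$. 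Then (2) is immediate: by (1) the system $\{L(A^j\gamma)D_{A^j}f\}_\gamma$ equals $D_{A^j}\{L(\gamma)f\}_\gamma$, and a unitary operator sends the Parseval frame $L(\Gamma)f$ of $\mathbf{H}_E$ (from Proposition \ref{V0}) to a Parseval frame of $D_{A^j}(\mathbf{H}_E)=\mathbf{H}_E^j$.

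Parts (4) and (5) are then pure measure theory on the parameter set. Identifying $\Sigma$ with a conull subset of $\mathbb{R}^{n-2d}$, the set $E$ of (\ref{wset}) is precisely the set $S=([-1/2,1/2]^{n-2d}\setminus[-1/4,1/4]^{n-2d})\cap\Sigma$ appearing in the proof of Proposition \ref{hom}, which satisfies the partition properties (1) and (2) of that proposition for the dilations $2\mathbf{I}_{n-2d}$. For (4), $\mathcal{P}(\mathbf{H}_E^j)$ and $\mathcal{P}(\mathbf{H}_E^{j'})$ are direct integrals over the essentially disjoint sets $2^{-j}\mathbf{I}_{n-2d}E$ and $2^{-j'}\mathbf{I}_{n-2d}E$, hence the two subspaces are orthogonal. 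For (5), the direct sum is legitimate by (4), and on the Plancherel side $\bigoplus_j\mathcal{P}(\mathbf{H}_E^j)=\int_{\cup_j 2^{-j}\mathbf{I}_{n-2d}E}^\oplus(L^2(\mathbb{R}^d)\otimes\mathbf{u})|\det B(\lambda)|d\lambda=\int_\Sigma^\oplus(L^2(\mathbb{R}^d)\otimes\mathbf{u})|\det B(\lambda)|d\lambda$, the last equality holding because the dilates cover $\Sigma$ up to a null set; applying $\mathcal{P}^{-1}$ finishes the claim.

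The step I expect to demand the most care is the equality $D_{A^j}(\mathbf{H}_E)=\mathbf{H}_E^j$ in (3): one must match the normalizing constant $2^{j(n-d)/2}$ against the homogeneous measure $|\det B(\lambda)|d\lambda$ under $\mu=2^j\mathbf{I}_{n-2d}\lambda$. Since $\det B$ is homogeneous of degree $d$ in $\lambda$, the Jacobian factor $2^{-j(n-2d)}$, the measure-scaling factor $2^{-jd}$, and the constant $2^{j(n-d)}$ cancel exactly, which is why the preimage $f$ above has the correct norm and the inclusion is a genuine Hilbert-space equality rather than merely a set-theoretic one. This cancellation is precisely the reason, flagged in the remark before Lemma \ref{dilated}, that a generic dilation would fail, so it is worth recording explicitly.
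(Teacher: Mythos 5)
Your proposal is correct, and it uses the same ingredients as the paper (Lemma \ref{dilated}, the commutation relation in (1), unitarity of $D_{A^{j}}$, and the dyadic tiling of $\Sigma$ by the sets $2^{-j}\mathbf{I}_{n-2d}E$), but it reverses the logical order of (2) and (3). The paper proves (2) directly: it shows the system $\left\{ L\left( A^{j}\gamma\right) D_{A^{j}}f\right\}$ is total in $\mathbf{H}_{E}^{j}$ and then computes $\sum_{\gamma}\left\vert \left\langle g,D_{A^{j}}L\left( \gamma\right) f\right\rangle \right\vert ^{2}=\left\Vert D_{A^{-j}}g\right\Vert _{\mathbf{H}_{E}}^{2}=\left\Vert g\right\Vert ^{2}$, and then declares (3) to be "a direct consequence of (1) and (2)." You instead prove (3) first --- establishing the genuine equality $D_{A^{j}}\left( \mathbf{H}_{E}\right) =\mathbf{H}_{E}^{j}$ by exhibiting explicit preimages $\mathcal{F}f\left( \mu\right) =2^{-j\left( n-d\right) /2}\mathcal{F}g\left( 2^{-j}\mathbf{I}_{n-2d}\mu\right)$ --- and then obtain (2) from the general fact that a unitary operator carries a Parseval frame of a subspace to a Parseval frame of its image. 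The two computations are ultimately the same, but your ordering has a concrete advantage: the paper's frame computation in (2) silently uses that $D_{A^{-j}}g\in\mathbf{H}_{E}$ whenever $g\in\mathbf{H}_{E}^{j}$, i.e.\ exactly the reverse inclusion $\mathbf{H}_{E}^{j}\subseteq D_{A^{j}}\left( \mathbf{H}_{E}\right)$, which Lemma \ref{dilated} as stated does not provide (it gives only $D_{A^{j}}\left( \mathbf{H}_{E}\right) \subseteq\mathbf{H}_{E}^{j}$); your preimage construction supplies precisely this missing inclusion. Your closing verification that the normalizing constant $2^{j\left( n-d\right) }$, the Jacobian $2^{-j\left( n-2d\right) }$, and the degree-$d$ homogeneity factor $2^{-jd}$ of $\left\vert \det B\left( \lambda\right) \right\vert$ cancel exactly is also a worthwhile addition: it justifies why $D_{A^{j}}$ restricts to a unitary from $\mathbf{H}_{E}$ onto $\mathbf{H}_{E}^{j}$, which is the point the paper flags (before Lemma \ref{dilated}) as the special feature of this dilation but never computes explicitly. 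Parts (1), (4), and (5) of your argument match the paper's.
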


\begin{proof}
To prove (1), let $\phi\in L^{2}\left(  N\right)  $, and $\phi_{j}=D_{A^{j}%
}\phi.$ Thus, $$L\left(  A^{j}\left(  \gamma\right)  \right)  D_{A^{j}}%
\phi\left(  x\right)  =D_{A^{j}}L\left(  \gamma\right)  \phi\left(  x\right)
.$$ To prove (2) we first show that the sequence
\[
\left\{  L\left(  A^{j}\left(  \gamma\right)  \right)  D_{A^{j}}f:\gamma
\in\Gamma\right\}
\]
is total in $\mathbf{H}_{E}^{j}.$ Let $g$ be a non zero function in
$\mathbf{H}_{E}^{j}$ such that $g$ is orthogonal to the closure of the span of
$\left\{  L\left(  A^{j}\left(  \gamma\right)  \right)  D_{A^{j}}f:\gamma
\in\Gamma\right\}  .$ In other words, for all $\gamma\in\Gamma,\left\langle
g,L\left(  A^{j}\left(  \gamma\right)  \right)  D_{A^{j}}f\right\rangle =0.$
However,%
\begin{align*}
\left\langle g,L\left(  A^{j}\left(  \gamma\right)  \right)  D_{A^{j}%
}f\right\rangle  &  =\left\langle g,D_{A^{j}}L\left(  \gamma\right)
f\right\rangle \\
&  =\left\langle D_{A^{-j}}g,L\left(  \gamma\right)  f\right\rangle .
\end{align*}
Since $D_{A^{-j}}g\in\mathbf{H}_{E},$ then $D_{A^{-j}}g=0\ $and $g=0.$ That
would be a contradiction. Now, we show that $\left\{  L\left(  A^{j}\left(
\gamma\right)  \right)  D_{A^{j}}f:\gamma\in\Gamma\right\}  $ is a frame. Let
$g$ be an arbitrary element of $\mathbf{H}_{E}^{j}$. We have
\begin{align*}
\sum_{\gamma\in\Gamma}\left\vert \left\langle g,L\left(  A^{j}\left(
\gamma\right)  \right)  D_{A^{j}}f\right\rangle \right\vert ^{2}  &
=\sum_{\gamma\in\Gamma}\left\vert \left\langle g,D_{A^{j}}L\left(
\gamma\right)  f\right\rangle \right\vert ^{2}\\
&  =\left\Vert D_{A^{-j}}g\right\Vert _{\mathbf{H}_{E}}^{2}\\
&  =\left\Vert g\right\Vert _{\mathbf{H}_{E}^{j}}^{2}.
\end{align*}
Part (3) is just a direct consequence of Parts (1), and (2), Part (4) is
obvious by the definition given in (\ref{HJ}). To prove Part (5), we use the
fact that the collection of sets $\left\{  A^{-j}E:j\in%
\mathbb{Z}
\right\}  $ forms a measurable partition of $\Sigma$ and we apply Part
(1),(2),(3) and (4).
\end{proof}

\begin{theorem}
Let $f$ be defined such that $\left\{  L\left(  \gamma\right)  f:\gamma
\in\Gamma\right\}  $ is a Parseval frame for $\mathbf{H}_{E}.$ The system
$\left\{  D_{A^{j}}L\left(  \gamma\right)  f:\gamma\in\Gamma,j\in%
\mathbb{Z}
\right\}  $ is a Parseval frame for the multiplicity-free space
\[%
{\displaystyle\bigoplus\limits_{j\in\mathbb{Z} }}
\mathbf{H}_{E}^{j}=\mathcal{P}^{-1}\left(  \int_{\Sigma}^{\oplus}\left(
L^{2}\left(
\mathbb{R}
^{d}\right)  \otimes\mathbf{u}\right)  \left\vert \det B\left(  \lambda
\right)  \right\vert d\lambda\right)  .
\]

\end{theorem}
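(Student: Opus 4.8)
The plan is to show that the collection $\{D_{A^j}L(\gamma)f : \gamma \in \Gamma, j \in \mathbb{Z}\}$ is a Parseval frame for the direct sum $\bigoplus_{j} \mathbf{H}_E^j$ by decomposing the frame condition across the orthogonal summands. The key structural facts are already in place: by the preceding Proposition, for each fixed $j$ the system $\{D_{A^j}L(\gamma)f : \gamma \in \Gamma\}$ is a Parseval frame for $\mathbf{H}_E^j$ (this is Part (2), after rewriting via $L(A^j\gamma)D_{A^j} = D_{A^j}L(\gamma)$), and the spaces $\mathbf{H}_E^j$ are mutually orthogonal with $\bigoplus_j \mathbf{H}_E^j$ equal to the stated direct integral over $\Sigma$ (Parts (4) and (5)).

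First I would let $g$ be an arbitrary element of $\bigoplus_{j \in \mathbb{Z}} \mathbf{H}_E^j$ and write $g = \sum_{j} g_j$ where $g_j = P_j g$ is the orthogonal projection of $g$ onto $\mathbf{H}_E^j$; by Part (5) this decomposition is valid and $\|g\|^2 = \sum_j \|g_j\|^2$. The crucial observation is that since $\mathbf{H}_E^{j'}$ is orthogonal to $\mathbf{H}_E^j$ for $j \neq j'$ and each vector $D_{A^j}L(\gamma)f$ lies in $\mathbf{H}_E^j$, we have $\langle g, D_{A^j}L(\gamma)f\rangle = \langle g_j, D_{A^j}L(\gamma)f\rangle$. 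Then I would compute
\begin{align*}
\sum_{j \in \mathbb{Z}}\sum_{\gamma \in \Gamma}\left| \langle g, D_{A^j}L(\gamma)f\rangle\right|^2
&= \sum_{j \in \mathbb{Z}}\sum_{\gamma \in \Gamma}\left| \langle g_j, D_{A^j}L(\gamma)f\rangle\right|^2 \\
&= \sum_{j \in \mathbb{Z}}\|g_j\|_{\mathbf{H}_E^j}^2 = \|g\|^2,
\end{align*}
where the middle equality uses the Parseval frame property on each $\mathbf{H}_E^j$ from Part (2) of the Proposition, and the final equality is the orthogonal decomposition of the norm.

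The main step requiring care is the justification of interchanging the double sum over $j$ and $\gamma$ with the summation defining the norm, but since all terms are nonnegative, Tonelli's theorem for sums makes this rearrangement automatic, so there is no genuine analytic obstacle. The only conceptual point to verify is that every vector $D_{A^j}L(\gamma)f$ indeed belongs to $\mathbf{H}_E^j$, which follows from Part (3) of the Proposition asserting $(D_{A^j}L(\Gamma))(\mathbf{H}_E) = \mathbf{H}_E^j$ together with $f \in \mathbf{H}_E$. In short, the theorem is essentially the assembly of the previously established per-level Parseval property and the orthogonal direct-sum decomposition of the ambient space; the proof is a routine orthogonality computation rather than a place where a new idea is needed.
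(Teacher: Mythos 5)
Your proposal is correct and matches the paper's approach: the paper gives no separate proof of this theorem, presenting it as an immediate consequence of Parts (1)--(5) of the preceding Proposition, and your orthogonal-decomposition argument is precisely that assembly spelled out. In particular, your use of Part (2) (rewritten via the intertwining relation of Part (1)) for the per-level Parseval identity, your appeal to Part (3) for the membership $D_{A^{j}}L\left( \gamma\right) f\in\mathbf{H}_{E}^{j}$, and your use of Parts (4)--(5) to split $\left\Vert g\right\Vert ^{2}=\sum_{j}\left\Vert g_{j}\right\Vert ^{2}$ constitute exactly the intended reasoning.
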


Fix an orthonormal basis for $L^{2}\left(
\mathbb{R}
^{d}\right)  ,$ $\left\{  \mathbf{e}_{k}:k\in\mathbb{I}\right\}  \ $where
$\mathbb{I}$ is an infinite countable set. For each $k\in\mathbb{I}$, we
obtain the following measurable field of unit vectors $\left\{  \mathbf{u}%
_{k}\left(  \lambda\right)  =\mathbf{e}_{k}:\lambda\in\Sigma\right\} $ and we
define
\[
\mathbf{H}_{E}^{j,k}=\mathcal{P}^{-1}\left(  \int_{2^{-j}\mathbf{I}_{n-2d}%
E}^{\oplus}\left(  L^{2}\left(
\mathbb{R}
^{d}\right)  \otimes\mathbf{e}_{k}\right)  \left\vert \det B\left(
\lambda\right)  \right\vert d\lambda\right)  .
\]
It is clear that $L^{2}\left(  N\right)  =%
{\displaystyle\bigoplus\limits_{\left(  k,j\right)  \in\mathbb{I\times
}\mathbb{Z}}}
\mathbf{H}_{E}^{j,k}.$

\begin{theorem}
Let $S$ be a finite subset of $\mathbb{I}$. For $k\in S,$ we define $f_{k}$
such that $\left\{  L\left(  \gamma\right)  f_{k}:\gamma\in\Gamma\right\}  $
is a Parseval frame for $\mathbf{H}_{E}^{0,k}.$ The system
\[
\left\{  D_{A^{j}}L\left(  \gamma\right)  f_{k}:\gamma\in\Gamma,j\in%
\mathbb{Z}
,k\in S\right\}
\]
is a Parseval frame for
\[%
{\displaystyle\bigoplus\limits_{\left(  k,j\right)  \in S\mathbb{\times
}\mathbb{Z}}}
\mathbf{H}_{E}^{j,k}.
\]
Furthermore, for $k\in\mathbb{I},$ we define $f_{k}$ such that $\left\{
L\left(  \gamma\right)  f_{k}:\gamma\in\Gamma\right\}  $ is a Parseval frame
for $\mathbf{H}_{E}^{0,k}.$ The system
\[
\left\{  D_{A^{j}}L\left(  \gamma\right)  f_{k}:\gamma\in\Gamma,j\in%
\mathbb{Z}
,k\in\mathbb{I}\right\}
\]
is a Parseval frame for $L^{2}\left(  N\right)  .$
\end{theorem}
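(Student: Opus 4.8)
The plan is to build the final statement out of the pieces already assembled, reducing the two claims to the single-subspace case handled by Proposition \ref{V0} and the preceding proposition on dilations. The essential structural fact is the orthogonal decomposition
\[
L^{2}\left(  N\right)  =\bigoplus_{\left(  k,j\right)  \in\mathbb{I}\times\mathbb{Z}}\mathbf{H}_{E}^{j,k},
\]
which is stated just before the theorem. Once we know each summand carries a Parseval frame and that these Parseval frames, taken together, give a Parseval frame for the direct sum, both assertions follow by restricting the index set of $k$ to $S$ or to all of $\mathbb{I}$.

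First I would fix $k$ and apply Proposition \ref{V0} with the field of unit vectors $\mathbf{u}_{k}\left(  \lambda\right)  =\mathbf{e}_{k}$ in place of $\mathbf{u}$, obtaining $f_{k}\in\mathbf{H}_{E}^{0,k}$ for which $\left\{  L\left(  \gamma\right)  f_{k}:\gamma\in\Gamma\right\}$ is a Parseval frame for $\mathbf{H}_{E}^{0,k}$. Next, for that fixed $k$, I would invoke the previous proposition (the one establishing that $\{D_{A^{j}}L\left(  \gamma\right)  f:\gamma\in\Gamma,\,j\in\mathbb{Z}\}$ is a Parseval frame for $\bigoplus_{j}\mathbf{H}_{E}^{j}$) verbatim, with $\mathbf{H}_{E}^{j}$ replaced throughout by $\mathbf{H}_{E}^{j,k}$; nothing in that argument used the specific vector $\mathbf{u}$ beyond its being a fixed unit vector, so the conclusion is that $\left\{  D_{A^{j}}L\left(  \gamma\right)  f_{k}:\gamma\in\Gamma,\,j\in\mathbb{Z}\right\}$ is a Parseval frame for $\bigoplus_{j\in\mathbb{Z}}\mathbf{H}_{E}^{j,k}$.

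Then I would assemble the summands. The key lemma I would use is that if $\{H_{\alpha}\}$ are mutually orthogonal closed subspaces of a Hilbert space, and for each $\alpha$ the system $\mathcal{F}_{\alpha}$ is a Parseval frame for $H_{\alpha}$, then $\bigcup_{\alpha}\mathcal{F}_{\alpha}$ is a Parseval frame for $\bigoplus_{\alpha}H_{\alpha}$. This is immediate from the Parseval identity: for $g\in\bigoplus_{\alpha}H_{\alpha}$, write $g=\sum_{\alpha}g_{\alpha}$ with $g_{\alpha}\in H_{\alpha}$; since each frame vector in $\mathcal{F}_{\alpha}$ lies in $H_{\alpha}$ and the $H_{\alpha}$ are orthogonal, $\langle g,\phi\rangle=\langle g_{\alpha},\phi\rangle$ for $\phi\in\mathcal{F}_{\alpha}$, so summing $\sum_{\alpha}\|g_{\alpha}\|^{2}=\|g\|^{2}$ gives the Parseval bound. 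Applying this with the index running over $k\in S$ (and all $j$) yields the first assertion; applying it with $k\in\mathbb{I}$ and all $j$, together with the identity $L^{2}\left(  N\right)  =\bigoplus_{(k,j)}\mathbf{H}_{E}^{j,k}$, yields the second.

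The main obstacle is really only bookkeeping: I must be careful that the subspaces $\mathbf{H}_{E}^{j,k}$ are genuinely pairwise orthogonal across both indices simultaneously, not just within a fixed $k$. Orthogonality for differing $j$ (same $k$) comes from the disjointness of the spectral supports $2^{-j}\mathbf{I}_{n-2d}E$, exactly as in Part (4) of the previous proposition; orthogonality for differing $k$ (any $j,j'$) comes from the orthogonality of the vectors $\mathbf{e}_{k}$ in the second tensor factor, since the Plancherel decomposition places $\mathbf{u}_{k}\left(  \lambda\right)  =\mathbf{e}_{k}$ in $L^{2}\left(  \mathbb{R}^{d}\right)$ and $\langle\mathbf{e}_{k},\mathbf{e}_{k'}\rangle=0$. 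These two facts together give joint orthogonality of the whole family $\{\mathbf{H}_{E}^{j,k}\}_{(k,j)}$, which is precisely what the direct-sum decomposition of $L^{2}(N)$ asserts, and which legitimizes the assembly lemma above. With that in place, the theorem follows with no further computation.
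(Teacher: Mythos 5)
Your proposal is correct and is exactly the argument the paper intends: the paper states this theorem without proof, treating it as an immediate consequence of Proposition \ref{V0} (applied with the unit vector $\mathbf{e}_{k}$ in place of $\mathbf{u}$), the preceding dilation proposition and theorem (which, as you note, nowhere use the particular choice of unit vector), and the decomposition $L^{2}\left(  N\right)  =\bigoplus_{\left(  k,j\right)  \in\mathbb{I}\times\mathbb{Z}}\mathbf{H}_{E}^{j,k}$. Your explicit check of joint orthogonality in both indices $j$ and $k$, together with the standard lemma that a union of Parseval frames for mutually orthogonal subspaces is a Parseval frame for their orthogonal direct sum, supplies precisely the bookkeeping the paper leaves implicit.
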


\begin{example}
A $9$-dimensional case.
\end{example}

Let $N$ a be Lie group with Lie algebra $\mathfrak{n}$ spanned by the basis
\[
\left\{  Z_{1},Z_{2},Z_{3},Y_{1},Y_{2},Y_{3},X_{1},X_{2},X_{3}\right\}
\]
with the following non-trivial Lie brackets.
\begin{align*}
\left[  X_{1},Y_{1}\right]   &  =Z_{1},\left[  X_{2},Y_{1}\right]
=Z_{2},\left[  X_{3},Y_{1}\right]  =Z_{3},\\
\left[  X_{1},Y_{2}\right]   &  =Z_{2},\left[  X_{2},Y_{2}\right]
=Z_{3},\left[  X_{3},Y_{2}\right]  =Z_{1},\\
\left[  X_{1},Y_{3}\right]   &  =Z_{3},\left[  X_{2},Y_{3}\right]
=Z_{1},\left[  X_{3},Y_{3}\right]  =Z_{2}.
\end{align*}
Let $A=\exp U$ such that for all $1\leq i\leq3,$ $\left[  U,Z_{i}\right]
=\ln\left(  2\right)  Z_{i}$ and $\left[  U,Y_{i}\right]  =\ln\left(
2\right)  Y_{i}.$ The Plancherel measure is
\[
d\mu\left(  \lambda\right)  =\left\vert -\lambda_{1}^{3}+3\lambda_{1}%
\lambda_{2}\lambda_{3}-\lambda_{2}^{3}-\lambda_{3}^{3}\right\vert d\lambda
_{1}d\lambda_{2}d\lambda_{3}%
\]
and is supported on the manifold%
\[
\Sigma=\left\{  \left(  \lambda_{1},\lambda_{2},\lambda_{3},0,\cdots,0\right)
\in\mathfrak{n}^{\ast}:-\lambda_{1}^{3}+3\lambda_{1}\lambda_{2}\lambda
_{3}-\lambda_{2}^{3}-\lambda_{3}^{3}\neq0\right\}
\]
which we identify with a Zariski open subset of $%
\mathbb{R}
^{3}.$ Let
\[
E=\left(  \left[  -1/2,1/2\right)  ^{3}\backslash\left[  -1/4,1/4\right)
^{3}\right)  \cap\Sigma.
\]

For each $\lambda\in E,$ there exists a Gabor system $\mathcal{G}\left(
g\left(  \lambda\right)  ,%
\mathbb{Z}
^{3}\times B\left(  \lambda\right)
\mathbb{Z}
^{3}\right)  $ which is a Parseval frame in $L^{2}\left(
\mathbb{R}
^{3}\right)  $ for
\[
B\left(  \lambda\right)  =\left(
\begin{array}
[c]{ccc}%
\lambda_{1} & \lambda_{2} & \lambda_{3}\\
\lambda_{2} & \lambda_{3} & \lambda_{1}\\
\lambda_{3} & \lambda_{1} & \lambda_{2}%
\end{array}
\right)  .
\]
We define $f\in L^{2}\left(  N\right)  $ such that
\[
\mathcal{F}f\left(  \lambda\right)  =\left(  \left\vert -\lambda_{1}%
^{3}+3\lambda_{1}\lambda_{2}\lambda_{3}-\lambda_{2}^{3}-\lambda_{3}%
^{3}\right\vert ^{-1/2}g\left(  \lambda\right)  \otimes\chi_{\left[
0,1\right]  ^{3}}\right)  \chi_{E}\left(  \lambda\right)  .
\]
The system $\left\{  D_{A^{j}}L\left(  \gamma\right)  f:\gamma\in\Gamma,j\in%
\mathbb{Z}
\right\}  $ forms a Parseval frame in
\[
\mathcal{P}^{-1}\left(  \int_{\Sigma}^{\oplus}L^{2}\left(
\mathbb{R}
^{3}\right)  \otimes\chi_{\left[  0,1\right]  ^{3}}d\mu\left(  \lambda\right)
\right)  ,
\]
and $\left\Vert f\right\Vert _{L^{2}\left(  N\right)  }^{2}=7.76\times10^{-2}.
$ Let
\[
\left\{  \beta_{n,k}\left(  t\right)  =\exp\left(  2\pi i\left\langle
t,n\right\rangle \right)  \chi_{\left[  0,1\right]  ^{3}}\left(  t-k\right)
:n\in%
\mathbb{Z}
^{3},k\in%
\mathbb{Z}
^{3}\right\}
\]
be an orthonormal basis in $L^{2}\left(
\mathbb{R}
^{3}\right)  .$ We define
\[
\mathcal{F}f_{n,k}\left(  \lambda\right)  =\left(  \left\vert -\lambda_{1}%
^{3}+3\lambda_{1}\lambda_{2}\lambda_{3}-\lambda_{2}^{3}-\lambda_{3}%
^{3}\right\vert ^{-1/2}g\left(  \lambda\right)  \otimes\beta_{n,k}\right)
\chi_{E}\left(  \lambda\right)  .
\]
The system
\[
\left\{  D_{A^{j}}L\left(  \gamma\right)  f_{n,k}:\gamma\in\Gamma,\text{ }%
j\in\mathbb{Z},\text{ }\left(  n,k\right)  \in\mathbb{Z}^{3}\times
\mathbb{Z}^{3}\right\}
\]
forms a Parseval frame in $L^{2}\left(  N\right)  .$

\begin{remark}
The tools used for the construction of Parseval frames in this paper are
available to us because this class of nilpotent Lie group admits unitary
irreducible representations which behave quite well. For other type of
nilpotent Lie groups, it is not clear how to generalize our construction. Here
is a fairly simple example where we encounter some major obstructions. Let $N$
be a nilpotent Lie group with Lie algebra $\mathfrak{n}$ over the reals,
spanned by $\left\{  Z,Y,X,W\right\}  $ with the following non-trivial Lie
brackets
\[
\left[  X,Y\right]  =Z,\left[  W,X\right]  =Y.
\]
$N$ is a step-3 nilpotent Lie group, and its dual is parametrized by a Zariski
open subset of $\mathbb{R}^{2}.$ However, notice that the center of this group
is only one-dimensional, and the group itself is not square-integrable mod the
center. The construction provided in this paper does not seem to work even for
such a simple group. The main obstruction is related to the fact that the
irreducible representations corresponding to elements in $N/Z\left(  N\right)
$ do not yield to a Gabor system. There are even step-two nilpotent Lie groups
for which our method does not work. For example, let $N$ be a freely-generated
step-two nilpotent Lie group with $3$ generators $Z_{1},Z_{2},Z_{3},$ with
non-trivial Lie brackets%
\[
\left[  Z_{1},Z_{2}\right]  =Z_{12},\left[  Z_{1},Z_{3}\right]  =Z_{13},\text{
and }\left[  Z_{2},Z_{3}\right]  =Z_{23}.
\]
It remains unclear at this point if it is possible to construct Parseval
frames using similar techniques. $\allowbreak$
\end{remark}

\end{document}